\newcounter{maint}
\def\k{\Bbbk}
\def\I{\mathbb{I}}
\def\ufo{\mathfrak{ufo}}
\newcommand{\Dchaintwo}[4]{
\rule[-3\unitlength]{0pt}{8\unitlength}
\begin{picture}(14,5)(0,3)
\put(2,4){\ifthenelse{\equal{#1}{l}}{\circle*{4}}{\circle{4}}}
\put(4,4){\line(2,0){20}}
\put(26,4){\ifthenelse{\equal{#1}{r}}{\circle*{4}}{\circle{4}}}
\put(2,10){\makebox[0pt]{\scriptsize #2}}
\put(14,8){\makebox[0pt]{\scriptsize #3}}
\put(26,10){\makebox[0pt]{\scriptsize #4}}
\end{picture}}
\numberwithin{equation}{section}
\newcommand{\ydh}{{}^H_H\mathcal{YD}}
\begin{document}

\newtheorem{theorem}{Theorem}[section]

\newtheorem{lemma}[theorem]{Lemma}

\newtheorem{corollary}[theorem]{Corollary}
\newtheorem{proposition}[theorem]{Proposition}

\theoremstyle{remark}
\newtheorem{remark}[theorem]{Remark}

\theoremstyle{definition}
\newtheorem{definition}[theorem]{Definition}

\theoremstyle{definition}
\newtheorem{conjecture}[theorem]{Conjecture}

\newtheorem{example}[theorem]{Example}
\newtheorem{problem}[theorem]{Problem}
\newtheorem{question}[theorem]{Question}


\def\k{\Bbbk}
\def\id{\mathrm{id}}
\def\ad{\mathrm{ad}}
\title[Finite-dimensional Nichols algebras over the Suzuki algebras  
\uppercase\expandafter{\romannumeral3}]
{Finite-dimensional Nichols algebras over the Suzuki algebras  
\uppercase\expandafter{\romannumeral3}:  simple Yetter-Drinfeld modules}

\author[Shi]{Yuxing Shi }
\address{Jiangxi Provincial Center for Applied Mathematics, and School of Mathematics and Statistics, Jiangxi Normal University, Nanchang, Jiangxi 330022, People's Republic of China}
\email{yxshi@jxnu.edu.cn}

\makeatletter
\@namedef{subjclassname@2020}{\textup{2020} Mathematics Subject Classification}
\makeatother
\subjclass[2020]{16T05, 16T25, 17B37}
\thanks{
\textit{Keywords:} Nichols algebra; Suzuki algebra; Hopf algebra; Yetter-Drinfeld module.
}

\begin{abstract}
In this paper, we continue to  investigate  finite-dimensional 
Nichols algebras over simple Yetter-Drinfeld modules of the Suzuki algebras $A_{N\, n}^{\mu\lambda}$. 
It is finished for the case $A_{N\, 2n}^{\mu\lambda}$. As for the case  $A_{N\, 2n+1}^{\mu\lambda}$, 
it boils down to the long-standing open problem: calculate dimensions of Nichols algebras of dihedral rack type $\Bbb D_{2n+1}$. 
It is interesting to see that  the Suzuki algebras are of set-theoretical type. 
We pose some question or problems for our future research. In particular, we are curious about how to generalize 
the correspondence between braidings of rack type and  group algebras to braidings and Hopf algebras of 
set-theoretical type.
\end{abstract}
\maketitle
\tableofcontents

\section{Introduction}
Let $\k$ be  an algebraicaly  closed field of characteristic $0$ and $\k^\times$ be $\k-\{0\}$. 
In this paper, we continue to  study 
Nichols algebras over simple Yetter-Drinfeld modules of the Suzuki algebras $A_{Nn}^{\mu\lambda}$. 
\subsection{Main results of the paper}
In \cite{Shi2020even} and \cite{Shi2020odd}, we investigated Nichols algebras over simple Yetter-Drinfeld modules of the Suzuki algebras, left the following cases unsolved. 
\begin{problem}
Determine dimensions of the following Nichols algebras: 
\begin{enumerate}
\item $\mathfrak{B}\left(\mathscr{K}_{jk,p}^{s}\right)$ in cases $n\geq 3$ or $n=2$, $\lambda=1$, 
see \cite{Shi2020even};
\item $\mathfrak{B}\left(\mathscr{L}_{k,pq}^s\right)$,  see \cite{Shi2020odd};
\item $\mathfrak{B}(V_{abe})$, $b^2\neq ae\neq 1$, $b^2\neq (ae)^{-1}$, 
          $b\in\Bbb{G}_{m}$ for $m\geq 3$, see  \cite{Shi2020even, Shi2020odd};
\item $\mathfrak{B}\left(\mathscr{N}_{k,pq}^s\right)$,  see \cite{Shi2020odd}.
\end{enumerate}
\end{problem}
In this paper, dimensions of Nichols algebras 
$\mathfrak{B}\left(\mathscr{K}_{jk,p}^{s}\right)$
and $\mathfrak{B}(V_{abe})$ are completely determined, see Theorem \ref{NicholsAlgK} and Theorem \ref{fomulaeVabe} respectively.  The Nichols algebras $\mathfrak{B}\left(\mathscr{L}_{k,pq}^s\right)$ are of 
dihedral rack type $\Bbb D_{2n+1}$, see Lemma \ref{NicholsAlgL}. The Yetter-Drinfeld modules 
$\mathscr{N}_{k,pq}^s$ are t-equivalent with  some braided vector spaces  of 
dihedral rack type $\Bbb D_{2n+1}$, see Theorem \ref{NicholsAlgN}.  If two braided vector spaces 
are t-equivalent, then dimensions of their corresponding Nichols algebras are the same, see 
Remark \ref{Remark_T_Equivalent}. 

As a summary, Nichols algebras over simple Yetter-Drinfeld modules of $A_{N\, 2n}^{\mu\lambda}$
are completely settled. 
\begin{theorem}\label{MainTheorem}
Let $M$ be a simple Yetter-Drinfeld module over $A_{N\,2n}^{\mu\lambda}$. If 
 $\mathfrak{B}(M)$ is finite-dimensional, then $\mathfrak{B}(M)$
can be summarized  as follows. 
\begin{enumerate}
\item Diagonal type \cite[Theorem 1.2]{Shi2020even}:
\begin{enumerate}
\item Cartan type $A_1$;
\item Cartan type $A_1\times A_1$;
\item Cartan type $A_2$;
\item  Cartan type $A_2\times A_2$;      
\item Super type ${\bf A}_{2}(q;\I_2)$;
\item The Nichols algebra $\ufo(8)$.
\end{enumerate}
\item Non-diagonal type:
\begin{enumerate}
\item Dihedral rack type $\Bbb D_4$, $64$-dimensional, see \cite[Lemma 4.22]{Shi2020even};
\item The Nichols algebra $\mathfrak{B}(V_{abe})$, 
\begin{align*}
\dim\mathfrak{B}(V_{abe})
=\left\{\begin{array}{ll}
4m, &b=-1, ae\in\Bbb{G}_m,  m>2, \\
m^2, &ae=1, b\in\Bbb{G}_m, m> 2,\\
\end{array}\right.
\end{align*}
see Theorem \ref{fomulaeVabe}, the Nichols algebras $\mathfrak{B}\left(\mathscr{G}_{jk,p}^{st}\right)$ and $\mathfrak{B}\left(\mathscr{H}_{jk,p}^{s}\right)$ in \cite[Section 4.2]{Shi2020even}.
\end{enumerate}
\end{enumerate}
\end{theorem}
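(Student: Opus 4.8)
The plan is to assemble Theorem~\ref{MainTheorem} as a synthesis of the classification work already carried out in \cite{Shi2020even}, \cite{Shi2020odd} together with the new computations (Theorem~\ref{fomulaeVabe}, Theorem~\ref{NicholsAlgK}) proved in this paper; the statement is really a bookkeeping consolidation, not a fresh argument, so the proof should be organized as a complete case analysis over the simple Yetter--Drinfeld modules $M\in{}^{A}_{A}\mathcal{YD}$ with $A=A_{N\,2n}^{\mu\lambda}$. First I would recall the classification of simple objects in ${}^{A}_{A}\mathcal{YD}$ from \cite{Shi2020even}: every such $M$ falls into a finite list of families — the one-dimensional modules and the higher-dimensional families $\mathscr{K}_{jk,p}^{s}$, $\mathscr{G}_{jk,p}^{st}$, $\mathscr{H}_{jk,p}^{s}$, $V_{abe}$, and the diagonal-type modules. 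For each family one must decide, using the known braiding matrix or rack realization, exactly when $\dim\mathfrak{B}(M)<\infty$ and, in that case, identify the Nichols algebra.

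The key steps, in order, are as follows. (i) Dispose of the diagonal-type modules: here the braiding is of diagonal type, so Heckenberger's classification of finite-dimensional diagonal Nichols algebras applies, and \cite[Theorem 1.2]{Shi2020even} already extracts precisely the list in part~(1) — Cartan types $A_1$, $A_1\times A_1$, $A_2$, $A_2\times A_2$, super type ${\bf A}_2(q;\I_2)$, and $\ufo(8)$; I would simply cite this. (ii) Handle the module $V_{abe}$ with $b\in\Bbb{G}_m$, $m\geq 3$: by Theorem~\ref{fomulaeVabe} of the present paper the Nichols algebra is finite-dimensional exactly when $b=-1,\ ae\in\Bbb{G}_m$ (giving dimension $4m$) or $ae=1,\ b\in\Bbb{G}_m$ (giving dimension $m^2$), which is precisely the displayed formula; the remaining ranges of $(a,b,e)$ were already excluded in \cite{Shi2020even, Shi2020odd}. (iii) Handle $\mathscr{K}_{jk,p}^{s}$: by Theorem~\ref{NicholsAlgK} the open sub-cases ($n\geq 3$, or $n=2$ with $\lambda=1$) are now resolved, and combining with the $n=2$, $\lambda\neq 1$ analysis of \cite{Shi2020even} one sees the only finite-dimensional instance is the $64$-dimensional dihedral rack type $\Bbb D_4$ Nichols algebra of \cite[Lemma 4.22]{Shi2020even}. (iv) Handle $\mathscr{G}_{jk,p}^{st}$ and $\mathscr{H}_{jk,p}^{s}$: these were already treated in \cite[Section 4.2]{Shi2020even}, and their finite-dimensional Nichols algebras are recorded in the last clause of part~(2)(b). (v) Observe that the enumeration in steps (i)--(iv) exhausts all simple Yetter--Drinfeld modules over $A_{N\,2n}^{\mu\lambda}$, so the union of the lists is exactly the content of the theorem.

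The one genuinely delicate point — and the step I expect to be the main obstacle — is verifying completeness of the case list, i.e. that no simple Yetter--Drinfeld module has been omitted and that the parameter constraints stitched together from the three papers (the excluded ranges in \cite{Shi2020even}, \cite{Shi2020odd} and the newly covered ranges here) cover the whole parameter space without gap or overlap. Concretely this amounts to checking that the conditions "$n\geq 3$ or ($n=2$, $\lambda=1$)" from Problem~1(1), when added to whatever was already done for $n=2,\ \lambda\neq 1$, give all $(N,n,\mu,\lambda)$; and similarly that the $(a,b,e)$-region $b^2\neq ae\neq 1$, $b^2\neq(ae)^{-1}$, $b\in\Bbb{G}_m$, $m\geq3$ of Problem~1(3) is exactly the complement of the region already settled. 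I would carry this out by laying the parameter conditions of all cited results side by side in a short table and checking the Boolean partition explicitly. Everything else reduces to citation: once completeness is confirmed, the theorem follows by collecting the finitely many finite-dimensional outputs.
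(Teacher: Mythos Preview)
Your overall strategy is exactly what the paper does: Theorem~\ref{MainTheorem} carries no standalone proof in the paper --- it is stated as a summary, with each clause pointing to the relevant result in \cite{Shi2020even} or in the present paper (Theorems~\ref{fomulaeVabe} and~\ref{NicholsAlgK}), preceded by the sentence ``As a summary, Nichols algebras over simple Yetter--Drinfeld modules of $A_{N\,2n}^{\mu\lambda}$ are completely settled.'' So the approach is correct and matches the paper.

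That said, your concrete case-mapping has two errors that would have to be repaired before the argument is complete. First, your list of simple Yetter--Drinfeld module families omits $\mathscr{I}_{pjk}^{s}$, the $2n$-dimensional family treated in \cite{Shi2020even} and revisited here in Lemma~\ref{NicholsAlgI}; this family is non-diagonal and is precisely the source of the $\Bbb D_4$ item~(2)(a) (cited to \cite[Lemma 4.22]{Shi2020even}). Second, and related, your step~(iii) is misattributed: by Theorem~\ref{NicholsAlgK} the finite-dimensional instances of $\mathscr{K}_{jk,p}^{s}$ are Cartan types $A_1\times A_1$ or $A_2$ when $n=1$ and Cartan type $A_2\times A_2$ when $n=2$ --- these feed the list in part~(1), not the $\Bbb D_4$ item in~(2)(a). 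Also note that $V_{abe}$ is not itself a family of simple Yetter--Drinfeld modules but a two-dimensional braided vector space; the simple modules realizing it are $\mathscr{G}_{jk,p}^{st}$ and $\mathscr{H}_{jk,p}^{s}$, exactly as the theorem statement records. Once you correct the roster of families (adding $\mathscr{I}$) and re-route the outputs of $\mathscr{K}$ to part~(1), your completeness check goes through.
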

\begin{remark}
The Kac-Paljutkin algebra $H_8$ \cite{MR0208401} is isomorphic to $A_{12}^{+-}$ as Hopf algebras.
All finite-dimensional Nichols algebras over Yetter-Drinfeld modules of $A_{12}^{+-}$
were obtained in \cite{Shi2019}. 
\end{remark}
As for  $A_{N\,2n+1}^{\mu\lambda}$, unsolved cases $\mathfrak{B}\left(\mathscr{L}_{k,pq}^s\right)$
and $\mathfrak{B}\left(\mathscr{N}_{k,pq}^s\right)$ are summarized to the long-standing open problem:
\begin{problem}
Determine dimensions of Nichols algebras of dihedral rack type $\Bbb D_{2n+1}$. 
\end{problem}
\begin{remark}
Nichols algebras of dihedral rack type $\Bbb D_{2n}$  $(n>2)$ are infinite dimensional, according to 
\cite[Theorem 3.6]{Andruskiewitsch2011} and Example \ref{TypeD}.
\end{remark}

\subsection{Future research}
There are interesting connections between the Suzuki algebras and the dihedral groups. 
We summarize them as follows. 
\begin{enumerate}
\item The Suzuki algebras $A_{Nn}^{\mu\lambda}$ are abelian extensions by 
the dihedral goup $D_{2n}$, see formula \eqref{AbelianExtension}.
\item The  dimension  distribution of simple Yetter-Drinfeld 
modules over $A_{Nn}^{\mu\lambda}$ is $1$, $2$ and $n$, which is the same 
with the dihedral goup $D_{2n}$, see \cite[Theorem 3.1]{Shi2020even}, \cite[Theorem 3.1]{Shi2020odd} 
and \cite{andruskiewitsch2007pointed}. 
\item Those $n$-dimensional simple Yetter-Drinfeld modules over $A_{Nn}^{\mu\lambda}$
are either of dihedral rack type $\Bbb D_{n}$, or  t-equivalent with braided vector spaces 
of dihedral rack type $\Bbb D_{n}$, see Theorem \ref{NicholsAlgK}, Theorem \ref{NicholsAlgN}, 
Lemma \ref{NicholsAlgI} and Lemma \ref{NicholsAlgL}. 
\end{enumerate}
It would be interesting to find a theoretical interpretation for these connections. It is well-known that 
braidings of rack type can be realized in categories of  Yetter-Drinfeld modules over group algebras  and 
Yetter-Drinfeld modules over any finite group algebras  are of rack type \cite{Andruskiewitsch2003MR1994219}. 
Given any non-degenerate set-theoretical solution of the Yang-Baxter equation $(X, r)$, we can 
construct braided vector spaces $W_{X, r}$, see Definition \ref{BraidedVectorSpace}. 
We call braided vector spaces $W_{X, r}$ are of set-theoretical type.  It is obvious that 
set-theoretical type includes rack type. Let $H$ be a Hopf algebra. If all Yetter-Drinfeld modules over 
$H$ are of set-theoretical type, we say the Hopf algebra $H$ is of set-theoretical type. 
The suzuki algebras are of set-theoretical type, see Lemma \ref{LemmaSetTheoretical}. 
We want to ask how to generalize the correspondence 
between braidings of rack type and  group algebras to braidings and Hopf algebras of 
set-theoretical type.

\begin{problem}
Realize braidings of set-theoretical type in categories of  Yetter-Drinfeld modules. 
\end{problem}

\begin{problem}
Classify Hopf algebras  of set-theoretical type. 
\end{problem}

\begin{question}
Let $H$ be any finite-dimensional Hopf algebra arising from abelian extension. Is $H$ of set-theoretical type?
\end{question}

\begin{remark}
Let $\Sigma=FG$ be an exact factorization of a finite group $\Sigma$. 
If a Hopf algebra $H$ is fitting into an abelian extension
\[
1\to \k^G\to H\to \k F\to 1, 
\]
associated to this factorization. 
Natale \cite{Natale2003} proved  that $H$ is group theoretical and the 
Drinfeld double $D(H)$ is a twisting of the quasi-Hopf algebra $D^\psi(\Sigma)$ \cite{Dijkgraaf1991} 
for  some 3-cocycle $\psi\in Z^3(\Sigma, \k^\times)$. From \cite[Formula 2.9]{Huang2018}, 
it seems braidings in 
${}_{\k \Sigma}^{\k \Sigma}\mathcal{YD}^{\psi}$ 
are of rack type in case that $\Sigma$ is abelian. Maybe these informations can connect with the fact 
that braidings of set-theoretical type could be conjugated with braidings of Rack type, 
see Example \ref{SetRackRelations}.  
\end{remark}

\begin{problem}
Classify finite-dimensional Nichols  algebras  of set-theoretical type. 
\end{problem}
\section{Preliminaries}
\subsection{The suzuki algebras $A_{Nn}^{\mu\lambda}$}
Suzuki introduced a family of cosemisimple Hopf algebras $A_{Nn}^{\mu\lambda}$ 
which is parametrized by integers $N\geq 1$, $n\geq 2$ and $\mu$, $\lambda=\pm 1$ \cite{Suzuki1998}. 
The Suzuki algebras $A_{Nn}^{\mu\lambda}$ are obtained by abelian extension
\begin{align}\label{AbelianExtension}
1 \to H\to A_{Nn}^{\mu\lambda}\to \k D_{2n}\to 1, 
\end{align}
where $H=\k \langle h_{+}, h_-\rangle$, $h_{\pm}$ are group-likes of  $A_{Nn}^{\mu\lambda}$
and $D_{2n}$ is the dihedral group of order $2n$, see \cite[Page 18]{Suzuki1998}.
The Suzuki Hopf algebra  $A_{Nn}^{\mu\lambda}$ is generated by $x_{11}$, $x_{12}$, $x_{21}$, 
$x_{22}$ subject to the relations:
\begin{align*}
&x_{11}^2=x_{22}^2,\quad x_{12}^2=x_{21}^2,\quad \chi _{21}^n=\lambda\chi _{12}^n,
\quad \chi _{11}^n=\chi _{22}^n,\\
&x_{11}^{2N}+\mu x_{12}^{2N}=1,\quad
x_{ij}x_{kl}=0\,\, \text{whenever $i+j+k+l$ is odd}, 
\end{align*}
where $\chi _{11}^m$, $\chi _{12}^m$, $\chi _{21}^m$ and $\chi _{22}^m$ are
defined as follows for  $m\in \Bbb Z^+$: 
$$\chi _{11}^m:=\overbrace{x_{11}x_{22}x_{11}\ldots\ldots }^{\textrm{$m$ }},\quad \chi _{22}^m:=\overbrace{x_{22}x_{11}x_{22}\ldots\ldots }^{\textrm{$m$ }},$$
$$\chi _{12}^m:=\overbrace{x_{12}x_{21}x_{12}\ldots\ldots }^{\textrm{$m$ }},\quad \chi _{21}^m:=\overbrace{x_{21}x_{12}x_{21}\ldots\ldots }^{\textrm{$m$ }}.$$
The comultiplication, counit and antipode  of $A_{Nn}^{\mu\lambda}$ are given by
\begin{equation}\label{eq5.3}
\Delta (\chi_{ij}^k)=\chi_{i1}^k\otimes \chi_{1j}^k+\chi_{i2}^k\otimes \chi_{2j}^k,\quad 
\varepsilon(x_{ij})=\delta_{ij}, \quad S(x_{ij})=x_{ji}^{4N-1}, 
\end{equation}
for $k\geq 1$, $i,j=1,2$. Let $\overline{i,i+j}=\{i,i+1,i+2,\cdots,i+j\}$ be  an index set, 
then the  basis of $A_{Nn}^{\mu\lambda}$ can be represented by 
\begin{equation}\label{eq5.2}
\left\{x_{11}^s\chi _{22}^t,\ x_{12}^s\chi _{21}^t \mid 
s\in\overline{1,2N}, t\in\overline{0,n-1} 
\right\}.
\end{equation}

\subsection{Nichols algebras}
\begin{definition}\cite[Definition 2.1]{andruskiewitsch2001pointed}
\label{defNicholsalgebra}
Let $H$ be a Hopf algebra and $V \in \ydh$. A braided $\mathbb{N}$-graded 
Hopf algebra $R = \bigoplus_{n\geq 0} R(n) \in \ydh$  is called 
the \textit{Nichols algebra} of $V$ if 
\begin{enumerate}
 \item[(i)] $\k \cong R(0)$, $V\cong R(1) \in \ydh$.
 \item[(ii)] $R(1) = \mathcal{P}(R)
=\{r\in R~|~\Delta_{R}(r)=r\otimes 1 + 1\otimes r\}$.
 \item[(iii)] $R$ is generated as an algebra by $R(1)$.
\end{enumerate}
In this case, $R$ is denoted by $\mathfrak{B}(V) = \bigoplus_{n\geq 0} \mathfrak{B}^{n}(V) $.    
\end{definition}
\begin{remark}
Let $(V, c)$ be a braided vector space, then 
the Nichols algebra 
$\mathfrak{B}(V)$ is completely determined by the braiding $c$.
More precisely, as proved in  \cite{MR1396857} and
noted in \cite{andruskiewitsch2001pointed},
$$\mathfrak{B}(V)=\k\oplus V\oplus\bigoplus\limits_{n=2}^\infty V^{\otimes n} /
\ker\mathfrak{S}_n=T(V)/\ker\mathfrak{S},$$
where $\mathfrak{S}_{n,1}\in \mathrm{End}_\k \left(V^{\otimes (n+1)}\right)$, 
$\mathfrak{S}_{n}\in \mathrm{End}_\k \left(V^{\otimes n}\right)$,
\begin{align*}
c_i&\coloneqq \mathrm{id}^{\otimes (i-1)}\otimes c
\otimes \mathrm{id}^{\otimes (n-i-1)}\in \mathrm{End}_\k \left(V^{\otimes n}\right), \\
\mathfrak{S}_{n,1} &\coloneqq\mathrm{id}+c_n+c_{n-1}c_n+\cdots
+c_1\cdots c_{n-1}c_n=\mathrm{id}+\mathfrak{S}_{n-1,1}c_n,\\
\mathfrak{S}_1&\coloneqq\mathrm{id}, \quad \mathfrak{S}_2\coloneqq\mathrm{id}+c, \quad
\mathfrak{S}_n\coloneqq \mathfrak{S}_{n-1,1}(\mathfrak{S}_{n-1}\otimes \mathrm{id}).
\end{align*}
\end{remark}

\begin{definition}\cite[Definition 5.10]{Andruskiewitsch2003MR1994219}
\label{Remark_T_Equivalent}
Two braided vector spaces $(V, c)$ and $(W, \tilde c)$ are t-equivalent if there is a collection of 
linear isomorphisms $U^n: V^{\otimes n}\to W^{\otimes n}$ intertwining the corresponding representations 
of the braid group $\Bbb B_n$ for all $n\geq 2$. 
\end{definition}
\begin{remark}\cite[Lemma 6.1]{Andruskiewitsch2003MR1994219}
If braided vector spaces $(V, c)$ and $(W, \tilde c)$ are t-equivalent, then 
the corresponding Nichols algebras  $\mathfrak{B}(V)$ and $\mathfrak{B}(W)$ are isomorphic as 
graded vector spaces. Hence $\dim \mathfrak{B}(V)=\dim \mathfrak{B}(W)$.
\end{remark}

\begin{lemma}\label{MainLemma}
Let $(V, c)$ be a braided vector space, and $\varphi_1, \varphi_2: V\to V$ be two invertible linear maps. 
If 
\[
\tilde c=(\varphi_1^{-1}\otimes \varphi_2^{-1})c(\varphi_1\otimes \varphi_2)
=(\varphi_2^{-1}\otimes \varphi_1^{-1})c(\varphi_2\otimes \varphi_1),
\]
 then  $(V, \tilde c)$ is a braided vector space and is t-equivalent to $(V, c)$.
\end{lemma}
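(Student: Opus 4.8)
The plan is to exhibit an explicit family of intertwiners built from alternating tensor powers of $\varphi_1$ and $\varphi_2$. As a preliminary, note that $\tilde c$ is automatically invertible, being the composite of the invertible maps $\varphi_1\otimes\varphi_2$, $c$, and $\varphi_1^{-1}\otimes\varphi_2^{-1}$; hence the only thing left to establish for $(V,\tilde c)$ to be a braided vector space is the braid equation, and this will fall out of the intertwining construction below rather than needing a separate direct check.

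For each $n\ge 2$ I would set $U^n=\Psi_1\otimes\Psi_2\otimes\cdots\otimes\Psi_n\in\mathrm{End}_\k(V^{\otimes n})$, where $\Psi_j=\varphi_1$ when $j$ is odd and $\Psi_j=\varphi_2$ when $j$ is even; each $U^n$ is invertible as a tensor product of invertible maps. The heart of the argument is the claim that $U^n\,\tilde c_i=c_i\,U^n$ for all $1\le i\le n-1$. Since $\tilde c_i$ and $c_i$ act as the identity on every tensor leg other than those in positions $i$ and $i+1$, this equality reduces to $(\Psi_i\otimes\Psi_{i+1})\,\tilde c=c\,(\Psi_i\otimes\Psi_{i+1})$ on those two legs. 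For $i$ odd, $(\Psi_i,\Psi_{i+1})=(\varphi_1,\varphi_2)$, and this is precisely the first equality of the hypothesis rewritten as $(\varphi_1\otimes\varphi_2)\tilde c=c(\varphi_1\otimes\varphi_2)$; for $i$ even, $(\Psi_i,\Psi_{i+1})=(\varphi_2,\varphi_1)$, and it is the second equality $(\varphi_2\otimes\varphi_1)\tilde c=c(\varphi_2\otimes\varphi_1)$. This is exactly where the two-sided compatibility in the statement is used: one equality services the odd positions, the other the even positions, so that consecutive tensor legs may carry different maps.

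Granting the claim, I would first deduce the braid equation for $\tilde c$: on $V^{\otimes 3}$ we have $\tilde c_j=(U^3)^{-1}c_jU^3$ for $j=1,2$, so conjugating the Yang--Baxter relation $c_1c_2c_1=c_2c_1c_2$ by $U^3$ gives $\tilde c_1\tilde c_2\tilde c_1=\tilde c_2\tilde c_1\tilde c_2$. Thus $(V,\tilde c)$ is a braided vector space, and for each $n$ the assignment $\sigma_i\mapsto\tilde c_i$ defines a representation of $\Bbb B_n$ on $V^{\otimes n}$, the $\tilde c$-counterpart of $\sigma_i\mapsto c_i$. Since $U^n$ intertwines these assignments on the Artin generators $\sigma_1,\dots,\sigma_{n-1}$, it intertwines the full $\Bbb B_n$-representations; as $U^n$ is an isomorphism and $n\ge 2$ was arbitrary, the family $\{U^n\}_{n\ge 2}$ witnesses that $(V,\tilde c)$ and $(V,c)$ are t-equivalent in the sense of Definition \ref{Remark_T_Equivalent}.

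I do not foresee a genuine obstacle: the content is essentially the observation that conjugating $c_i$ by an alternating tensor $\varphi_1\otimes\varphi_2\otimes\varphi_1\otimes\cdots$ transports it to $\tilde c_i$, and the only care needed is the parity bookkeeping together with the point that both halves of the hypothesis are indispensable — when $\varphi_1=\varphi_2$ the two equalities coincide and one recovers the familiar statement that conjugating every leg by a single $\varphi$ yields a t-equivalent braiding.
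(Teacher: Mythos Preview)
Your proposal is correct and follows essentially the same approach as the paper: the paper defines the very same alternating tensor $T=T_1\otimes\cdots\otimes T_n$ (your $U^n$), verifies $\tilde c_i=T^{-1}c_iT$ by splitting into the odd/even parity of $i$ exactly as you do, and then conjugates the braid relation. Your write-up is slightly more explicit about invertibility of $\tilde c$ and about why both equalities in the hypothesis are needed, but the argument is the same.
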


\begin{proof}
Let $T: V^{\otimes n}\to V^{\otimes n}$ be an invertible map with 
$T=T_1\otimes T_2\otimes \cdots \otimes T_n$ such that $T_i$ equals $\varphi_1$ in case 
that $i$ is odd and $\varphi_2$ in case that $i$ is even.
If $i$ is odd, then 
\begin{align*}
\tilde c_i&={\rm id}^{\otimes (i-1)}\otimes \tilde c\otimes {\rm id}^{\otimes (n-i-1)}\\
&=T_1^{-1}T_1\otimes \cdots  \otimes T_{i-1}^{-1}T_{i-1}\otimes \left((f_1^{-1}\otimes f_2^{-1})c
       (f_1\otimes f_2)\right)\otimes 
       T_{i+2}^{-1}T_{i+2}\otimes \cdots\otimes T_n^{-1}T_n\\
&=T^{-1}c_iT.       
\end{align*}
If $i$ is even,  then 
\begin{align*}
\tilde c_i&={\rm id}^{\otimes (i-1)}\otimes \tilde c\otimes {\rm id}^{\otimes (n-i-1)}\\
&=T_1^{-1}T_1\otimes \cdots  \otimes T_{i-1}^{-1}T_{i-1}\otimes \left((f_2^{-1}\otimes f_1^{-1})c
       (f_2\otimes f_1)\right)\otimes 
       T_{i+2}^{-1}T_{i+2}\otimes \cdots\otimes T_n^{-1}T_n\\
&=T^{-1}c_iT.    
\end{align*}
We have $\tilde c_{i+1}\tilde c_i \tilde c_{i+1}=T^{-1}c_{i+1}c_ic_{i+1}T=T^{-1}c_ic_{i+1}c_iT
=\tilde c_{i}\tilde c_{i+1} \tilde c_{i}$. So $(V, \tilde c)$ is a braided vector space and is t-equivalent to $(V, c)$.
\end{proof}

\subsection{Set-theoretical solution of the Yang-Baxter equation}
A set-theoretical solution of the Yang-Baxter equation is a  pair $(X, r)$, where 
$X$   is a non-empty set and $r: X\times X\rightarrow X\times X$ is a bijective map
such that
\[
(r\times {\rm id})({\rm id}\times r)(r\times {\rm id})
=({\rm id}\times r)(r\times {\rm id})({\rm id}\times r)
\]
holds. By convention, we write 
\[
r(i, j)=(\sigma_i(j), \tau_j(i)),\quad  \forall i, j\in X.
\]
A solution $(X, r)$ is non-degenerate if all the maps $\sigma_i: X\rightarrow X$
and $\tau_i: X\rightarrow X$ are bijective for all $i\in X$, and involutive if 
$r^2=\id_{X\times X}$.

\begin{definition}\label{BraidedVectorSpace}
\cite[Lemma 5.7]{Andruskiewitsch2003MR1994219}
Let $(X, r)$ be a non-degenerate set-theoretical  solution of the Yang-Baxter equation, $|X|=m\in\Bbb Z^{\geq 2}$.
Then $W_{X, r}=\bigoplus_{i\in X}\k w_i$ is a braided vector 
space with the braiding given by 
\begin{align}\label{YBEquation}
c(w_{i}\otimes w_j)
&=R_{i,j} w_{\sigma_i(j)}\otimes w_{\tau_j(i)}, \quad 
\text{where}\,\, R_{i,j}\in\k^\times\,\,\text{and} \\ 
R_{i, j}R_{\tau_j(i), k}R_{\sigma_i(j),\sigma_{\tau_j(i)}(k)}
&=R_{j,k}R_{i,\sigma_j(k)}R_{\tau_{\sigma_j(k)}(i),\tau_k(j)}, 
\quad \forall i, j, k\in X. \label{YBEquation}
\end{align}
The braided vector space  $W_{X, r}$ is called of set-theoretical type. 
\end{definition}

\begin{remark}
The braiding of $W_{X,r}$ is rigid according to 
\cite[Lemma 3.1.3]{Schauenburg1992}, which means that $W_{X,r}$ can be realized in categories of 
Yetter-Drinfeld modules.
\end{remark}

If all Yetter-Drinfeld modules over a Hopf algebra $H$ are of set-theoretical type, then 
we say $H$ is  of set-theoretical type. 

\begin{lemma}\label{LemmaSetTheoretical}
The Suzuki algebra $A_{Nn}^{\mu\lambda}$ is  of set-theoretical type. 
\end{lemma}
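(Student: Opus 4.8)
The plan is to show that every simple Yetter-Drinfeld module over $A_{Nn}^{\mu\lambda}$ supports a basis on which the braiding acts in the ``set-theoretical'' shape of Definition \ref{BraidedVectorSpace}, i.e. $c(w_i\otimes w_j)=R_{i,j}\,w_{\sigma_i(j)}\otimes w_{\tau_j(i)}$ for suitable scalars $R_{i,j}\in\k^\times$ and a non-degenerate set-theoretical solution $(X,r)$ with $r(i,j)=(\sigma_i(j),\tau_j(i))$. Since a Hopf algebra is declared of set-theoretical type exactly when \emph{all} its Yetter-Drinfeld modules are of this type, and since every Yetter-Drinfeld module over a (co)semisimple Hopf algebra decomposes into simples, it suffices to verify the claim on the simple ones. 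I would therefore invoke the classification of simple Yetter-Drinfeld modules over $A_{Nn}^{\mu\lambda}$ from \cite[Theorem 3.1]{Shi2020even} and \cite[Theorem 3.1]{Shi2020odd}: the dimension distribution is $1,2,n$, and explicit module-comodule structures are available in each case.

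First I would handle the easy strata. For a one-dimensional $M=\k w$, the braiding is $c(w\otimes w)=q\,w\otimes w$ with $q\in\k^\times$; take $X=\{*\}$, $r=\id$, $R_{*,*}=q$ — the hexagon \eqref{YBEquation} is trivially satisfied. For a two-dimensional $M$, one checks from the explicit action and coaction that in a suitable eigenbasis $\{w_1,w_2\}$ the braiding permutes the tensor-square basis up to scalars; this is precisely a $2$-element set-theoretical solution, and the hexagon equation for the resulting scalars $R_{i,j}$ reduces to the braid relation $c_1c_2c_1=c_2c_1c_2$, which holds because $c$ is a braiding. The bulk of the work is the $n$-dimensional family. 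Here I would use the structural input already recorded in the introduction: by Theorem \ref{NicholsAlgK}, Theorem \ref{NicholsAlgN}, Lemma \ref{NicholsAlgI} and Lemma \ref{NicholsAlgL}, every $n$-dimensional simple Yetter-Drinfeld module is either of dihedral rack type $\Bbb D_n$, or t-equivalent to a braided vector space of dihedral rack type $\Bbb D_n$. Rack-type braidings are set-theoretical by construction (one reads off $\sigma_i(j)=i\triangleright j$, $\tau_j(i)=i$, with $R_{i,j}$ the rack cocycle), so the genuinely rack-type modules are immediately covered.

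The remaining point — and the main obstacle — is to promote ``t-equivalent to a rack-type braiding'' to ``is itself of set-theoretical type.'' A priori t-equivalence only gives equality of Nichols algebra dimensions (Remark \ref{Remark_T_Equivalent}), not preservation of the set-theoretical form. However, in our situation the t-equivalences are not abstract: by Lemma \ref{MainLemma} they are implemented by conjugation $\tilde c=(\varphi_1^{-1}\otimes\varphi_2^{-1})c(\varphi_1\otimes\varphi_2)$ with $\varphi_1,\varphi_2$ \emph{diagonal} in the relevant basis (they are built from scalar eigenvalue adjustments in the constructions of Theorem \ref{NicholsAlgN} and Lemma \ref{NicholsAlgL}). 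Conjugating a braiding of the form $c(w_i\otimes w_j)=R_{i,j}w_{\sigma_i(j)}\otimes w_{\tau_j(i)}$ by a diagonal $\varphi_1\otimes\varphi_2$ again yields a braiding of exactly the same combinatorial shape, with the same underlying $(X,r)$ but rescaled coefficients $R'_{i,j}$; and since $\tilde c$ is still a braiding, those $R'_{i,j}$ automatically satisfy the hexagon \eqref{YBEquation}. Thus I would: (1) quote the classification to reduce to the three dimension strata; (2) dispatch dimensions $1$ and $2$ by direct inspection; (3) for dimension $n$, use the rack-type description and, in the t-equivalent cases, observe that the equivalence of Lemma \ref{MainLemma} is diagonal and hence preserves the set-theoretical form, so $A_{Nn}^{\mu\lambda}$ is of set-theoretical type. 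The only delicate bookkeeping is checking, case by case, that the isomorphisms $\varphi_1,\varphi_2$ appearing in \cite{Shi2020even, Shi2020odd} are indeed diagonal with respect to the rack basis; this is routine but must be done.
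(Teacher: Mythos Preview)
Your strategy is sound in outline but differs from the paper's, and it contains one factual slip that you should correct.

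The paper's proof is more direct. Rather than invoking the rack-type classification of Section~3, it reads off two structural features of every simple $M=\bigoplus_i\k w_i$ directly from the explicit module/comodule formulas in \cite{Shi2020even,Shi2020odd}: (i) for each $w_i$, either $x_{11},x_{22}$ send $w_i$ to nonzero scalar multiples of basis vectors while $x_{12},x_{21}$ kill it, or vice versa; (ii) the coaction has the shape $\rho(w_i)\in\k^\times\alpha\otimes w_j+\k^\times\beta\otimes w_k$ with $\alpha$ in the $x_{11}^s\chi_{22}^t$-span and $\beta$ in the $x_{12}^s\chi_{21}^t$-span. These two facts combine so that in $c(w_a\otimes w_b)=(w_a)_{(-1)}\cdot w_b\otimes(w_a)_{(0)}$ exactly one of the two summands survives and lands on a single basis tensor with nonzero coefficient; semisimplicity then extends this to all objects. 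This avoids any appeal to Section~3 and treats all dimension strata uniformly.

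Your route---reduce to simples, handle dimensions $1$ and $2$ by hand, and for dimension $n$ pull the set-theoretical form back through the t-equivalences of Theorems~\ref{NicholsAlgK}, \ref{NicholsAlgN} and Lemmas~\ref{NicholsAlgI}, \ref{NicholsAlgL}---also works, but only after a correction. The maps $\varphi_1,\varphi_2$ used in those results are \emph{not} diagonal: for instance, in Section~\ref{section_Nichos_K} one has $\varphi_2(w_a)=w_{2n+2-a}$ for $a$ even, and similarly for $\mathscr{N}_{k,pq}^s$. They are monomial (a permutation of the basis times a diagonal scaling). Fortunately your conclusion survives: conjugating a set-theoretical braiding by $\varphi_1\otimes\varphi_2$ with $\varphi_1,\varphi_2$ monomial again yields a braiding that sends basis tensors to scalar multiples of basis tensors, with the same (relabelled) underlying solution $(X,r)$. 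So your argument is salvageable once ``diagonal'' is replaced by ``monomial'' throughout. Note also that your proof makes forward references to Section~3; there is no logical circularity, but the paper's direct argument sidesteps this and is both shorter and self-contained.
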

\begin{proof}
Let $M=\bigoplus_{i\in I}\k w_i$ be any simple Yetter-Drinfeld module over $A_{Nn}^{\mu\lambda}$. 
According to \cite{Shi2020even, Shi2020odd},  the module structure of $(M, \cdot)$
and the  comodule structure of $(M, \rho)$ satisfies the following conditions respectively: 
\begin{enumerate}
\item For any $i\in I$, there exist some $j, k\in I$ such that 
\begin{align*}
& x_{11}\cdot w_i\in \k^{\times} w_j, \quad 
x_{22}\cdot w_i\in \k^{\times} w_k, \quad  x_{12}\cdot w_i=x_{21}\cdot w_i=0, \\
\text{or\quad} & x_{12}\cdot w_i\in \k^{\times} w_j, \quad 
x_{21}\cdot w_i\in \k^{\times} w_k, \quad x_{11}\cdot w_i=x_{22}\cdot w_i=0; 
\end{align*}
\item For any $i\in I$, there exist some 
$\alpha\in\left\{x_{11}^s\chi _{22}^t, \mid s\in\overline{1,2N}, t\in\overline{0,n-1} 
\right\}$, $\beta\in \left\{x_{12}^s\chi _{21}^t \mid s\in\overline{1,2N}, t\in\overline{0,n-1} 
\right\}$ and some $j, k\in I$ such that 
\begin{align*}
\rho(w_i)\in \k^{\times} \alpha\otimes w_j+\k^{\times} \beta \otimes w_k.
\end{align*}
\end{enumerate}
Since the category of Yetter-Drinfeld modules over $A_{Nn}^{\mu\lambda}$ is semisimple, it 
is easy to see that $A_{Nn}^{\mu\lambda}$ is of set-theoretical type. 
\end{proof}

\subsection{Racks}
\begin{definition}
Let $X$ be a non-empty set, then $(X,\rhd)$ is a rack if $\rhd: X\times X\to X$ is a function, such that 
$\phi_i: X\to X$, $\phi_i(j)=i\rhd j$,  is bijection for all $i\in X$ and 
\begin{align}
i\rhd(j\rhd k)=(i\rhd j)\rhd (i\rhd k), \quad \forall i, j, k\in X.
\end{align}
\end{definition}

\begin{remark}\cite{Brieskorn1988}
$(X, \rhd)$ is a rack if and only if $(X, r)$ is a set-theoretical solution of the Yang-Baxter equation, where 
$r(x, y)=(x\rhd y, x)$ for all $x, y\in X$. 
\end{remark}

\begin{example}\cite{zbMATH01585085, LuJianghua2000MR1769723}
\label{SetRackRelations}
Let $(X, r)$ be a non-degenerate set-theoretical solution of the Yang-Baxter equation. Then 
$(X, \rhd)$ is a rack with $x\rhd y=\tau_x\sigma_{\tau_y^{-1}(x)}(y)$ for all $x, y\in X$. 
Let $T: X\times X\rightarrow X\times X$ with $T(x, y)=(\tau_y(x), y)$, then 
$T$ is invertible and $T^{-1}(x, y)=(\tau_y^{-1}(x), y)$. We have 
\begin{align}
TrT^{-1}(x, y)=(x\rhd y, x).
\end{align}
\end{example}

\begin{example}\label{TypeD}
Let $\Bbb D_n=(\Bbb Z_n, \rhd)$, $i\rhd j=2i-j\in\Bbb Z_n$ for all $i, j\in \Bbb Z_n$. 
Then $\Bbb D_n$ is a rack and called a dihedral rack.  If $n>2$, then the dihedral rack 
$\Bbb D_{2n}$ is of type $D$, see \cite[Lemma 2.2]{Andruskiewitsch2010}. 
\end{example}

\section{The Nichols algebras over the Suzuki algebras}
\subsection{The Nichols algebras $\mathfrak{B}(V_{abe})$}
Let  $V_{abe}=\k v_1\oplus \k v_2$ be a  braided vector space with  three parameters $a, b, e\in \k$, and the braiding given by: 
\begin{align*}
c(v_1\otimes v_1)&=a v_2\otimes v_2,\quad 
&c(v_1\otimes v_2)&=b   v_1\otimes v_2,\\
c(v_2\otimes v_1)&=b v_2\otimes v_1,\quad 
&c(v_2\otimes v_2)&=e   v_1\otimes v_1.
\end{align*}
\begin{theorem}\cite[Proposition 3.16]{Andruskiewitsch2018}
\begin{align}\label{fomulaeVabe}
\dim\mathfrak{B}(V_{abe})
=\left\{\begin{array}{ll}
27, & ae=b^2, b^3=1\neq b, \quad \text{Cartan type $A_2$},\\
4m, &b=-1, ae\in\Bbb{G}_m,  m\in \Bbb Z^+, \\
m^2, &ae=1, b\in\Bbb{G}_m\,\,\text{for}\,\,m\geq 2,\\
\infty, & otherwise, 
\end{array}\right.
\end{align}
where $\Bbb{G}_m$ denotes the set of $m$-th primitive roots of unity. 
\end{theorem}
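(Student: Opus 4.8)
The plan is to analyze the braided vector space $V_{abe}$ via a change of basis that reduces $c$ to something already understood — either diagonal type (handled by the classification of Heckenberger) or the well-studied $\Bbb D_n$ rack braidings — and then invoke Lemma \ref{MainLemma} and the $t$-equivalence principle of Remark \ref{Remark_T_Equivalent} to transport dimension formulas. First I would observe that $V_{abe}$ carries a natural $\Bbb Z_2$-grading with $v_1, v_2$ in distinct degrees, and the braiding respects a ``twisted'' version of this: on the $(1,1)$ and $(2,2)$ slots the braiding swaps the grading while on the mixed slots it preserves it. This suggests writing $c = P \circ D$ where $P$ is a monomial permutation-type map and $D$ is the scalar part, and then computing the square $c^2$ on each of the four basis tensors: one finds $c^2(v_1\otimes v_1) = ae\, v_1\otimes v_1$, $c^2(v_2\otimes v_2) = ae\, v_2\otimes v_2$, and $c^2(v_i\otimes v_j) = b^2\, v_i\otimes v_j$ for $i\neq j$. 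The eigenvalues of $c$ themselves are then $\pm\sqrt{ae}$ (on $\k v_1\otimes v_1 \oplus \k v_2\otimes v_2$) and $b$ (on the mixed part, with multiplicity two), so the three regimes in \eqref{fomulaeVabe} correspond exactly to forcing finiteness conditions on these eigenvalues.

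Next I would treat the two finite non-diagonal cases. For $ae=1$: here $c^2$ is the identity on the whole space, so $(V_{abe}, c)$ is an involutive solution after adjusting by the scalar $b$ on the mixed part; more precisely, rescaling $v_1 \mapsto v_1$, $v_2 \mapsto \xi v_2$ for a suitable $\xi$ with $\xi^2 = e$ turns the braiding into $c(v_i\otimes v_j) = b\, v_{\sigma(i)}\otimes v_{\sigma(j)}$ with $\sigma$ the transposition on $\{1,2\}$ when $i=j$ and identity when $i\neq j$. This is (up to the global scalar $b$) the flip on a $2$-element set, i.e. the $\Bbb D_2$-rack braiding twisted by $b\in\Bbb G_m$, whose Nichols algebra is the quantum-plane-type algebra of dimension $m^2$; I would cite the known computation for $\Bbb D_2$ (equivalently, a rank-two quantum linear space over $\k[\Bbb Z_m]$) for this. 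For $b=-1$: now the mixed eigenvalue is $-1$ and the relations kill $v_1v_2 + v_2 v_1$-type elements, while $v_1, v_2$ behave like the generators of a $\Bbb D_4$-flavored rack braiding twisted by a scalar in $\Bbb G_m$ with $m = \mathrm{ord}(ae)$; I would identify $(V_{abe},c)$ in this regime with (a cocycle twist of) the principal $2$-dimensional YD-submodule sitting inside a braiding of dihedral rack type, via Example \ref{TypeD} and Lemma \ref{MainLemma}, and read off $\dim = 4m$ from the corresponding known formula. The Cartan $A_2$ case $ae=b^2$, $b^3=1\neq b$ is the diagonal one: after the same rescaling the braiding becomes diagonal with braiding matrix $\left(\begin{smallmatrix} b & b \\ b & b\end{smallmatrix}\right)$ up to sign bookkeeping, whose symmetrized Cartan matrix is that of $A_2$, giving $\dim = 27$.

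Finally, for the ``$\infty$ otherwise'' clause I would argue by exhibiting, in each remaining regime, an infinite-dimensional sub-braided-vector-space or a quantum Serre-type relation that fails to be nilpotent: if $b$ is not a root of unity the mixed part already contributes a polynomial ring; if $b$ is a root of unity but $ae\notin\{1, b^2\}$ (or $ae=b^2$ but $b$ is not a primitive cube root) one checks that the subalgebra generated by an eigenvector of $c$ on $\k v_1\otimes v_1\oplus \k v_2\otimes v_2$ together with a mixed eigenvector has a non-nilpotent element, using the standard criterion that a diagonal braiding of Cartan type with entries $(q,q)$ and $q$ not a small root of unity is infinite-dimensional, combined with the $t$-equivalence reduction. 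The main obstacle will be the bookkeeping in the $b=-1$ case: making the identification with a genuine dihedral-rack braiding precise requires choosing the scalars $\varphi_1,\varphi_2$ in Lemma \ref{MainLemma} correctly so that the cross terms $R_{i,j}$ match those of the $\Bbb D_{2m}$ (or a covering rack) braiding, and verifying the cocycle condition \eqref{YBEquation} — I expect this to be the most delicate step, and the cleanest route is probably to cite \cite{Andruskiewitsch2018} directly for the $V_{abe}$ computation rather than re-derive it, using the present framework only to explain the set-theoretical/dihedral-rack interpretation.
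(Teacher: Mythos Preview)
Your proposal has a genuine gap: you never find the single uniform reduction that drives the paper's argument. After the preliminary rescaling $w_1=\sqrt[4]{e/a}\,v_1$, $w_2=v_2$ (so that $V_{abe}\cong V_{\sqrt{ae},b,\sqrt{ae}}$), the paper applies Lemma~\ref{MainLemma} with $\varphi_1=\varphi$ the swap $w_1\leftrightarrow w_2$ and $\varphi_2=\id$. One checks directly that $(\varphi^{-1}\otimes\id)c(\varphi\otimes\id)=(\id\otimes\varphi^{-1})c(\id\otimes\varphi)$, and the resulting $\tilde c$ is \emph{diagonal} with $q_{11}=q_{22}=b$ and $q_{12}=q_{21}=\sqrt{ae}$. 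Thus $V_{abe}$ is $t$-equivalent to a rank-two diagonal braiding for \emph{all} values of $a,b,e$ simultaneously, and every line of \eqref{fomulaeVabe} --- including the ``otherwise $\infty$'' clause --- is read off from Heckenberger's classification in one stroke: Cartan $A_2$, super type ${\bf A}_2(q;\I_2)$, Cartan $A_1\times A_1$, and the complement.

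Your case-by-case route, besides being unnecessary, contains actual errors. In the $ae=1$ case a diagonal rescaling $v_2\mapsto\xi v_2$ cannot produce a $\Bbb D_2$-rack braiding: the rescaled $c$ still sends $w_1\otimes w_1$ to a multiple of $w_2\otimes w_2$, whereas any rack-type braiding $c(w_i\otimes w_j)=R_{ij}\,w_{i\rhd j}\otimes w_i$ fixes the index of the second tensor factor; moreover $\Bbb D_2$ is the trivial rack, so its braiding is the flip, which is diagonal --- not what you have. Your claim in the Cartan $A_2$ case that ``after the same rescaling the braiding becomes diagonal'' is likewise false: no diagonal change of basis can undo the permutation $v_1\otimes v_1\mapsto v_2\otimes v_2$; only the conjugation by $\varphi\otimes\id$ does that, and this is exactly the step you are missing. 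Finally, the $b=-1$ regime is not a piece of a dihedral rack braiding at all; under the uniform reduction it is the diagonal super type ${\bf A}_2(q;\I_2)$, which immediately gives $4m$.
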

\begin{remark}
Finite-dimensional Nichols algebras $\mathfrak{B}(V_{abe})$ were obtained by 
\cite[Proposition 3.16]{Andruskiewitsch2018}. In \cite{Shi2020even} and \cite{Shi2020odd}, 
the author proved that $\mathfrak{B}(V_{abe})$ is infinite dimensional in case 
$b^2=(ae)^{-1}$,  $b\in\Bbb{G}_{m}$ for   $m\geq 3$. In case $\dim\mathfrak{B}(V_{abe})=m^2$, 
those Nichols algebras are related with Pascal's triangle, see arXiv:2103.06489. This kind of 
interesting connection is generalized to Nichols algebras of dimension $n^m$ and multinomial expansion 
$(x_1+x_2+\cdots+x_m)^n$, see \cite{Shi2023}. 
\end{remark}
\begin{proof}
Let $w_1=\sqrt[4]{\frac ea} v_1$, $w_2=v_2$, then 
\begin{align*}
c(w_1\otimes w_1)&=\sqrt{ae} w_2\otimes w_2,\quad 
&c(w_1\otimes w_2)&=b   w_1\otimes w_2,\\
c(w_2\otimes w_1)&=b w_2\otimes w_1,\quad 
&c(w_2\otimes w_2)&=\sqrt{ae}   w_1\otimes w_1.
\end{align*}
So $V_{abe} \cong V_{\sqrt{ae}\,\, b\, \sqrt{ae}}$ as braided vector space. 
Let $W=V_{\sqrt{ae}\,\, b\, \sqrt{ae}}$ and 
 $\varphi: W\to W$ be an invertible 
map with $\varphi(w_1)=w_2$,   
$\varphi(w_2)=w_1$. It is easy to see that 
\[
\tilde c=(\varphi^{-1}\otimes {\rm id})c(\varphi\otimes {\rm id})
=({\rm id} \otimes \varphi^{-1})c({\rm id} \otimes \varphi), 
\]
and $(W, \tilde c)$ is a braided vector space of diagonal type.  More precisely, we have 
\begin{align*}
\tilde c(w_1\otimes w_1)&=b w_1\otimes w_1,\quad 
&\tilde c(w_1\otimes w_2)&=\sqrt{ae}   w_2\otimes w_1,\\
\tilde c(w_2\otimes w_1)&=\sqrt{ae} w_1\otimes w_2,\quad 
&\tilde c(w_2\otimes w_2)&=b   w_2\otimes w_2.
\end{align*}
According to Lemma \ref{MainLemma} or \cite[Example 5.11]{Andruskiewitsch2003MR1994219}, $V_{abe}$ is t-equivalent to $(W, \tilde c)$. 
So $\dim \mathfrak{B}(V_{abe})=\dim \mathfrak{B}(W, \tilde c)$. By Heckenberger's work \cite{heckenberger2009classification}, we have
\[
\dim \mathfrak{B}(W, \tilde c)=
\left\{\begin{array}{ll}
27, & ae=b^2, b^3=1\neq b, \quad \text{Cartan type $A_2$},\\
4m, &b=-1, ae\in\Bbb{G}_m,  m\geq 2, \quad 
\text{super type ${\bf A}_{2}(q;\I_2)$ } \text{\cite{Andruskiewitsch2017}},\\
m^2, &ae=1, b\in\Bbb{G}_m\,\,\text{for}\,\,m\geq 2, \quad \text{Cartan type $A_1\times A_1$},\\
\infty, & otherwise.
\end{array}\right.
\]
\end{proof}

\subsection{The Nichols algebras 
$\mathfrak{B}\left(\mathscr{K}_{jk,p}^{s}\right)$}\label{section_Nichos_K}

Let $\mathscr{K}_{jk,p}^s=\bigoplus_{a=1}^{2n}\k w_a$ be the $2n$-dimensional  simple Yetter-Drinfeld module over 
$A_{N\, 2n}^{\mu\lambda}$, where   $j=\left\{\begin{array}{ll}
          1\,\text{or}\,3, &\text{if}\,\lambda=-1,\\
          2\,\text{or}\,4, &\text{if}\,\lambda=1,
          \end{array}\right.
          $
          $k\in\overline{0,N-1}$, $p\in \Bbb{Z}_2$,
          $s\in\overline{1,N}$, 
see   \cite[Appendix]{Shi2020even}.  Set 
\begin{align*}
b+2a-2=2nr+d,\quad r\in\Bbb{N},\quad d\in\overline{0, 2n-1},\\
2n+1-b+2a-2=2ne+f, \quad e\in\Bbb{N},\quad f\in\overline{0, 2n-1},
\end{align*}
then the braiding of $\mathscr{K}_{jk,p}^{s}$ can be described as follows \cite[Section 4.4]{Shi2020even}:  
\begin{align*}
&\quad c(w_a\otimes w_b)\\
&=\left\{\begin{array}{ll}
(-1)^p\left(\bar{\mu}K\right)^s w_b\otimes w_1, & a=1,\\
(-1)^p\lambda^r (\bar{\mu}K)^{s+n(r-2)}J^{n(r-2)}
w_{2n}\otimes w_{2n-a+2}, &a>1, d=0, 2\mid (a+b),\\
(-1)^p\lambda^{r+1} (\bar{\mu}K)^{s+n(r-1)}J^{n(r-1)} 
w_{d}\otimes w_{2n-a+2}, &a>1, d>0, 2\mid (a+b),\\
(-1)^p\lambda^e\left(\bar{\mu}K\right)^{s-ne-2+2a}J^{-ne}
w_{1}\otimes w_{2n-a+2}, &a>1, f=0, 2\nmid (a+b),\\
\frac{(-1)^p\lambda^{e+1}\left(\bar{\mu}K\right)^{s-n(e+1)-2+2a}}{J^{n(e+1)}}
w_{2n+1-f}\otimes w_{2n-a+2}, &a>1, f>0, 2\nmid (a+b), \\
\end{array}\right. 
\end{align*}
where
$K=\omega^{8nk}$,
$J=\omega^{2jN}$.
The parameter $\omega$ is a primitive $8nN$-th root of unity, so $J^{4n}=1$.

\begin{theorem}\cite[Lemma 4.23]{Shi2020even}\label{NicholsAlgK}
The Yetter-Drinfeld module $\mathscr{K}_{jk,p}^{s}$ is t-equivalent to a braided vector space 
associated to dihedral rack $\Bbb D_{2n}$. Let $q=(-1)^p \left(K \bar{\mu}\right)^{s}$. 
\begin{enumerate}
\item If $n=1$, then 
\[
\dim \mathfrak{B}\left(\mathscr{K}_{jk,p}^{s}\right)<\infty\iff
\left\{\begin{array}{ll}
\lambda q^2=1\neq q, & \text{Cartan type}\,\, A_1\times A_1,\\
\lambda q^3=1\neq q, & \text{Cartan type}\,\, A_2.
\end{array}\right.
\]
\item If $n=2$, then 
\[
\dim \mathfrak{B}\left(\mathscr{K}_{jk,p}^{s}\right)=
\left\{\begin{array}{ll}
64, &q=-1 \text{ and } \lambda=1, \text{Cartan type $A_2\times A_2$}, \\
\infty, & otherwise. 
\end{array}\right.
\]
\item If $n>2$, then $\dim \mathfrak{B}\left(\mathscr{K}_{jk,p}^{s}\right)=\infty$.
\end{enumerate}
\end{theorem}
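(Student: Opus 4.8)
The plan is to prove that the braiding of $\mathscr{K}_{jk,p}^{s}$ is, up to t-equivalence, a braiding of dihedral rack type $\Bbb D_{2n}$, and then to read off the dimension of the Nichols algebra from the known classifications in the three ranges of $n$. First I would rewrite the five-case formula for $c(w_a\otimes w_b)$ in the standard set-theoretical normal form $c(w_a\otimes w_b)=R_{a,b}\,w_{\sigma_a(b)}\otimes w_{\tau_b(a)}$. Inspecting it shows that the second components are governed by the single involution $\tau_b(a)=2-a$ of $\Bbb Z_{2n}$, which is independent of $b$, and that $\sigma_a$ is the affine map $b\mapsto b+2(a-1)$ when $a\equiv b\pmod 2$ and $b\mapsto b-2(a-1)$ otherwise; by Lemma \ref{LemmaSetTheoretical} the pair $(\Bbb Z_{2n},r)$ with $r(a,b)=(\sigma_a(b),\tau_b(a))$ is a non-degenerate set-theoretical solution of the Yang--Baxter equation.

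Next I would pass to rack type and identify the rack. Following Example \ref{SetRackRelations}, the rack attached to $(\Bbb Z_{2n},r)$ is $x\rhd y=\tau_x\sigma_{\tau_y^{-1}(x)}(y)=2-\sigma_{2-x}(y)$, which equals $2x-y$ when $x\equiv y\pmod 2$ and $4-2x-y$ otherwise; a direct check (convenient first on small $n$) shows that $(\Bbb Z_{2n},\rhd)$ is, after a suitable relabelling of the index set, the dihedral rack $\Bbb D_{2n}$. Applying Lemma \ref{MainLemma} repeatedly --- to pass from $r$ to its associated rack, to implement the relabelling permutation, and to perform a final diagonal normalisation --- I would conclude that $\mathscr{K}_{jk,p}^{s}$ is t-equivalent to a braided vector space $(\k\Bbb D_{2n},c^{\mathbf q})$, $c^{\mathbf q}(x\otimes y)=\mathbf q_{x,y}\,(x\rhd y)\otimes x$, whose $2$-cocycle $\mathbf q$ has cohomology class determined by $q=(-1)^p(K\bar\mu)^s$ and by $\lambda$ alone. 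By Remark \ref{Remark_T_Equivalent} this reduces the computation to that of $\dim\mathfrak{B}(\k\Bbb D_{2n},c^{\mathbf q})$.

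It then remains to evaluate this in each range of $n$. For $n=1$ the rack $\Bbb D_2$ is the two-element trivial rack, and a direct computation shows that the resulting $2$-dimensional braiding is t-equivalent to the diagonal braiding whose generalised Dynkin diagram has both vertices labelled $q$ and the edge labelled $\lambda q^2$ --- just as $V_{abe}$ reduces to diagonal type in the proof of Theorem \ref{fomulaeVabe}; Heckenberger's classification \cite{heckenberger2009classification} then gives finiteness precisely when that diagram is of Cartan type $A_1\times A_1$, i.e. $\lambda q^2=1\neq q$, or of Cartan type $A_2$, i.e. $\lambda q^3=1\neq q$. For $n=2$ I would invoke the computation of Nichols algebras over $\Bbb D_4$ (cf.\ \cite[Lemma 4.22]{Shi2020even}): $\mathfrak{B}(\k\Bbb D_4,c^{\mathbf q})$ is finite of dimension $64$ and of Cartan type $A_2\times A_2$ precisely when $q=-1$ and $\lambda=1$, and is infinite otherwise. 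For $n>2$, $\Bbb D_{2n}$ is of type $D$ by Example \ref{TypeD}, hence $\mathfrak{B}(\k\Bbb D_{2n},c^{\mathbf q})$ is infinite-dimensional for every $\mathbf q$, by \cite[Theorem 3.6]{Andruskiewitsch2011}.

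The main obstacle is the second step. Since the braiding is presented as a case split on the parities of $a$ and $b$, one has to verify carefully both that $x\rhd y=2-\sigma_{2-x}(y)$ really reproduces a relabelled copy of $\Bbb D_{2n}$, and that the prefactors $\lambda^{r}(\bar\mu K)^{\bullet}J^{\bullet}$ collapse, modulo coboundaries of $\Bbb D_{2n}$, to a cocycle governed only by $q$ and $\lambda$; one must also check the hypotheses of Lemma \ref{MainLemma} (in particular its symmetry condition) at each application. Once the t-equivalence with $(\k\Bbb D_{2n},c^{\mathbf q})$ is in place, the three conclusions are essentially citations: the rank-two diagonal classification for $n=1$, the $\Bbb D_4$ computation for $n=2$, and the type-$D$ theorem for $n>2$.
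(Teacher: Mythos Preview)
Your outline is essentially the paper's approach: both use Lemma~\ref{MainLemma} to exhibit a t-equivalence from $(\mathscr{K}_{jk,p}^{s},c)$ to a braided vector space over the dihedral rack $\Bbb D_{2n}$, and then read off the three ranges of $n$ from known classifications. The paper, however, does not factor the t-equivalence into the three moves you describe (set-to-rack conjugation by $\tau\otimes\id$, relabelling, diagonal normalisation). Instead it writes down in one stroke an explicit pair $(\varphi_1,\varphi_2)$ --- $\varphi_2$ a parity-dependent permutation, $\varphi_1$ a different parity-dependent permutation twisted by carefully chosen scalars $x_k$ --- and then verifies $\tilde c=\bar c$ by an exhaustive case analysis (Lemmas~\ref{NicholsEvenK_tildeC} and~\ref{NicholsEvenK_barC}). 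Your step-by-step factorisation is viable here precisely because $\tau_b(a)=2-a$ is independent of $b$ (so the conjugating map $T$ of Example~\ref{SetRackRelations} is a genuine tensor product), but checking the scalar condition $R_{\tau(i),j}=R_{i,\tau(j)}$ will in practice force you to absorb the diagonal normalisation into the same step --- which is exactly what the paper's scalars $x_k$ accomplish. So the ``main obstacle'' you flag is not bypassed; it is the same case-by-case bookkeeping the paper carries out.

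One point where your citations diverge from the paper: for $n=2$ you invoke \cite[Lemma 4.22]{Shi2020even}, but that lemma concerns a different Yetter--Drinfeld module. The paper instead writes out the conjugated braiding $\bar c$ explicitly, observes that $\k w_1\oplus\k w_3$ and $\k w_2\oplus\k w_4$ are diagonal braided subspaces with Dynkin diagram $(q,\,q^2,\,q)$, deduces that finiteness forces $q=-1$ or $q^3=1\neq q$, and then appeals to the rank-two classification of Heckenberger--Vendramin \cite{Heckenberger2017} to conclude that only the $q=-1$, $J^4=1$ (hence $\lambda=1$) case survives with $\dim=64$. Your sketch does not visibly exclude the $q^3=1$ possibility, so you should either reproduce this subspace argument or cite \cite{Heckenberger2017} directly rather than the $\Bbb D_4$ lemma.
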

\begin{remark}
In case $n\leq 2$, those finite-dimensional Nichols algebras are already obtained in
\cite[Lemma 4.23]{Shi2020even}.  The relations of the $64$-dimensional Nichols 
algebra of Cartan type $A_2\times A_2$ is presented in \cite[Remark 4.24]{Shi2020even}.
\end{remark}
\begin{proof}
Set $x_{2k}=x_1 (J^{-1}K\bar{\mu})^{n-2k+1}$ for $k\in\overline{1, n}$ 
and $x_{2k+1}=\lambda x_1 (JK\bar{\mu})^{-n+2k}$ for $k\in\overline{1, n-1}$. 
It is a case by case verification for $\tilde c=\bar c$, see Lemmas \ref{NicholsEvenK_tildeC}
and \ref{NicholsEvenK_barC}. 
For example, if $2\mid a$, $2\mid b$, $2n+2a-b=2nr+d$, $r\in\Bbb N$, 
$d\in\overline{0, 2n-1}$, then 
\begin{align*}
4n+2+b-2a=\left\{\begin{array}{ll}
2n(3-r)+2-d, & d=0 \text{ or } d=2, \\
2n(2-r)+2n+2-d, & d\in\overline{4, 2n-2}.
\end{array}\right.
\end{align*}
\begin{enumerate}
\item If $d=0$, then $2n+2a-b=2nr$ and 
\begin{align*}
\tilde c(w_a\otimes w_b)&=\dfrac{x_a(-1)^p\lambda^r (\bar{\mu}K)^{s+n(r-2)}J^{n(r-2)}}
          {x_{2n}}w_{2n}\otimes w_a,\\
\bar c(w_a\otimes w_b)&=\dfrac{x_b(-1)^p\lambda^{3-r+1} (\bar{\mu}K)^{s+n(3-r-1)}J^{n(3-r-1)}}
          {x_{a}}w_{2n}\otimes w_a.          
\end{align*}
It is easy to see that  $\tilde c(w_a\otimes w_b)=\bar c(w_a\otimes w_b)
\Leftrightarrow x_a^2 (\bar{\mu}JK)^{2n(r-2)}=x_bx_{2n}$. 
This condition holds, since 
\begin{align*}
\dfrac{x_bx_{2n}}{x_a^2}
&=\dfrac{x_1 (J^{-1}K\bar{\mu})^{n-b+1}\cdot x_1 (J^{-1}K\bar{\mu})^{n-2n+1}}
   {[x_1 (J^{-1}K\bar{\mu})^{n-a+1}]^2}
=(J^{-1}K\bar{\mu})^{2a-b-2n}=(J^{-1}K\bar{\mu})^{2n(r-2)}.
\end{align*} 
\item If $d=2$, then $2n+2a-b=2nr+2$ and 
\begin{align*}
\tilde c(w_a\otimes w_b)&=\dfrac{x_a(-1)^p\lambda^{r+1} (\bar{\mu}K)^{s+n(r-1)}J^{n(r-1)}}
          {x_{d}}w_{d}\otimes w_a,\\
\bar c(w_a\otimes w_b)&=\dfrac{x_b(-1)^p\lambda^{3-r} (\bar{\mu}K)^{s+n(3-r-2)}J^{n(3-r-2)}}
          {x_{a}}w_{2}\otimes w_a.          
\end{align*}
In this case,  $\tilde c(w_a\otimes w_b)=\bar c(w_a\otimes w_b)
\Leftrightarrow x_a^2 (\bar{\mu}JK)^{2n(r-1)}=x_bx_{2}.$
This condition holds, since 
\begin{align*}
\dfrac{x_bx_{2}}{x_a^2}
&=\dfrac{x_1 (J^{-1}K\bar{\mu})^{n-b+1}\cdot x_1 (J^{-1}K\bar{\mu})^{n-2+1}}
   {[x_1 (J^{-1}K\bar{\mu})^{n-a+1}]^2}
=(J^{-1}K\bar{\mu})^{2a-b-2}=(J^{-1}K\bar{\mu})^{2n(r-1)}.
\end{align*} 
\item If $d\in\overline{4, 2n-2}$, then 
\begin{align*}
\tilde c(w_a\otimes w_b)&=\dfrac{x_a(-1)^p\lambda^{r+1} (\bar{\mu}K)^{s+n(r-1)}J^{n(r-1)}}
          {x_{d}}w_{d}\otimes w_a,\\
\bar c(w_a\otimes w_b)&=\dfrac{x_b(-1)^p\lambda^{2-r+1} (\bar{\mu}K)^{s+n(2-r-1)}J^{n(2-r-1)}}
          {x_{a}}w_{d}\otimes w_a.          
\end{align*}
In this case,  $\tilde c(w_a\otimes w_b)=\bar c(w_a\otimes w_b)
\Leftrightarrow x_a^2 (\bar{\mu}JK)^{2n(r-1)}=x_bx_{d}.$
This condition holds, since 
\begin{align*}
\dfrac{x_bx_{d}}{x_a^2}
&=\dfrac{x_1 (J^{-1}K\bar{\mu})^{n-b+1}\cdot x_1 (J^{-1}K\bar{\mu})^{n-d+1}}
   {[x_1 (J^{-1}K\bar{\mu})^{n-a+1}]^2}
=(J^{-1}K\bar{\mu})^{2a-b-d}
=(J^{-1}K\bar{\mu})^{2n(r-1)}.
\end{align*} 

\end{enumerate}

According to Lemma \ref{MainLemma}, 
$\left(\mathscr{K}_{jk,p}^{s}, c\right)$ and 
$\left(\mathscr{K}_{jk,p}^{s}, \bar c\right)$ are t-equivalent. From  Lemma \ref{NicholsEvenK_barC}, it is easy to see that the braided vector space 
$\left(\mathscr{K}_{jk,p}^{s}, \bar c\right)$ is associated to the dihedral rack $\Bbb D_{2n}$. 
In case $n>2$, then 
\[
\dim \mathfrak{B}\left(\mathscr{K}_{jk,p}^{s}\right)
=\dim \mathfrak{B}\left(\mathscr{K}_{jk,p}^{s}, \bar c\right)=\infty, 
\]
according to Remark \ref{Remark_T_Equivalent} and \cite[Theorem 3.6]{Andruskiewitsch2011}.

In case $n=2$, the braiding of the braided vector space $\left(\mathscr{K}_{jk,p}^{s}, \bar c\right)$ 
is given by 
\begin{align*}
\bar c(w_{1}\otimes w_{1})&= q w_{1}\otimes w_{1},&
\bar c(w_{1}\otimes w_{2})&= \frac{K \bar{\mu} q}{J} w_{4}\otimes w_{1},\\
\bar c(w_{1}\otimes w_{3})&= \lambda q w_{3}\otimes w_{1},&
\bar c(w_{1}\otimes w_{4})&= \frac{J q}{K \bar{\mu}} w_{2}\otimes w_{1},\\
\bar c(w_{2}\otimes w_{1})&= \frac{\lambda q}{J^{5} K \bar{\mu}} w_{3}\otimes w_{2},&
\bar c(w_{2}\otimes w_{2})&=  q w_{2}\otimes w_{2},\\
\bar c(w_{2}\otimes w_{3})&= \frac{K \lambda \bar{\mu} q}{J^{3}} w_{1}\otimes w_{2},&
\bar c(w_{2}\otimes w_{4})&= J^{4} \lambda q w_{4}\otimes w_{2},\\
\bar c(w_{3}\otimes w_{1})&= \lambda q w_{1}\otimes w_{3},&
\bar c(w_{3}\otimes w_{2})&= \frac{K \lambda \bar{\mu} q}{J^{5}} w_{4}\otimes w_{3},\\
\bar c(w_{3}\otimes w_{3})&=  q w_{3}\otimes w_{3},&
\bar c(w_{3}\otimes w_{4})&= \frac{\lambda q}{J^{3} K \bar{\mu}} w_{2}\otimes w_{3},\\
\bar c(w_{4}\otimes w_{1})&= \frac{ q}{J^{5} K \bar{\mu}} w_{3}\otimes w_{4},&
\bar c(w_{4}\otimes w_{2})&= \frac{\lambda q}{J^{4}} w_{2}\otimes w_{4},\\
\bar c(w_{4}\otimes w_{3})&= \frac{K  \bar{\mu} q}{J^{3}} w_{1}\otimes w_{4},&
\bar c(w_{4}\otimes w_{4})&=  q w_{4}\otimes w_{4}.
\end{align*}
Both $\k w_1\oplus \k w_3$ and $\k w_2\oplus \k w_4$ are braided vector subspaces of diagonal type
and  their Dynkin diagrams are given by 
$\xymatrix{ \overset{q}{\underset{\ }{\circ}}\ar  @{-}[rr]^{q^2}  &&
\overset{q}{\underset{\ }{\circ}}}$ if $q\neq -1$. 
So we have 
\[
\dim \mathfrak{B}\left(\mathscr{K}_{jk,p}^{s}, \bar c\right)<\infty \iff 
q=-1 \text{ or } q^3=1\neq q.
\]  
According to \cite{Heckenberger2017}, if  $\dim \mathfrak{B}\left(\mathscr{K}_{jk,p}^{s}, \bar c\right)<\infty$, then $\dim \mathfrak{B}\left(\mathscr{K}_{jk,p}^{s}, \bar c\right)=64$. 
This happens only in case $J^4=1$ and $q=-1$. The condition $J^4=1$ implies that $\lambda =1$. 
It is finished by \cite[Lemma 4.23]{Shi2020even}.
\end{proof}

Let $\varphi_1, \varphi_2: \mathscr{K}_{jk,p}^{s}\to \mathscr{K}_{jk,p}^{s}$ be two invertible 
maps satisfying 
\begin{align*}
\varphi_2(w_a)&=\left\{\begin{array}{ll}
w_{2n+2-a}, & 2\mid a,\\
w_a, & 2\nmid a,
\end{array}\right.\\
\varphi_1(w_b)&=\left\{\begin{array}{ll}
x_bw_{2n+2-b}, &b\neq 1,  2\nmid b, x_b\in\k^{\times},\\
x_bw_b, & b=1\,\,\text{or}\,\, 2\mid b, x_b\in\k^{\times}.
\end{array}\right.
\end{align*}

\begin{lemma}\label{NicholsEvenK_barC}
Let $\bar c=(\varphi_2^{-1}\otimes \varphi_1^{-1})c(\varphi_2\otimes \varphi_1)$, and  $\Psi_E=2nr+d$, $\Psi_O=2ne+f$, where $r, e\in\Bbb N$,  $d, f\in\overline{0, 2n-1}$, and 
\begin{align*}
\Psi_E&=\left\{\begin{array}{ll}
4n+2+b-2a, & 2\mid a, 2\mid b,\\
2a-1, & 2\nmid a, b=1, \\
2n+2a-b, & 2\nmid a, b\neq 1, 2\nmid b,
\end{array}\right.\\
\Psi_O&=\left\{\begin{array}{ll}
6n+2-2a, & 2\mid a, b=1,\\
4n+1+b-2a, & 2\mid a, b\neq 1, 2\nmid b, \\
2n-1+2a-b, & 2\nmid a,  2\mid b.
\end{array}\right.
\end{align*}
\begin{enumerate}
\item If $a=1$, then 
\begin{align*}
\bar c(w_a\otimes w_b)=\left\{\begin{array}{ll}
x_bx_1^{-1}(-1)^p(\bar{\mu}K)^s w_{2n+2-b}\otimes w_1, & b\,\,\text{even},\\
(-1)^p(\bar{\mu}K)^s w_1\otimes w_1, &b=1,\\
x_bx_1^{-1}(-1)^p(\bar{\mu}K)^s w_{2n+2-b}\otimes w_1, & 1\neq b\,\,\text{odd}.\\
\end{array}\right.
\end{align*}
\item If $a\neq 1$ and $2\mid (a+b)$, then 
\begin{align*}
\bar c(w_a\otimes w_b)=\left\{\begin{array}{ll}
x_bx_a^{-1}(-1)^p\lambda^r(\bar{\mu}K)^{s+n(r-2)}J^{n(r-2)} w_2\otimes w_a, & d=0,\\
x_bx_a^{-1}(-1)^p\lambda^{r+1}(\bar{\mu}K)^{s+n(r-1)}J^{n(r-1)} 
     w_{2n+2-d}\otimes w_a, & 0\neq d\,\,\text{even},\\
x_bx_a^{-1}(-1)^p\lambda^{r+1}(\bar{\mu}K)^{s+n(r-1)}J^{n(r-1)} 
     w_{d}\otimes w_a, & d\,\,\text{odd}.\\
\end{array}\right.
\end{align*}
\item If $a\neq 1$ and $2\nmid (a+b)$, then 
\begin{align*}
\bar c(w_a\otimes w_b)=\left\{\begin{array}{ll}
x_bx_a^{-1}(-1)^p\lambda^e(\bar{\mu}K)^{s+n(4-e)+2-2a}J^{-ne} w_1\otimes w_a, & f=0,\vspace{1mm}\\
\dfrac{x_b(-1)^p\lambda^{e+1}(\bar{\mu}K)^{s+n(3-e)+2-2a}}
       {x_aJ^{n(e+1)}} 
       w_{2n+1-f}\otimes w_a, & 0\neq f\,\,\text{even},\vspace{1mm}\\
\dfrac{x_b(-1)^p\lambda^{e+1}(\bar{\mu}K)^{s-n(e+1)-2+2a}}
       {x_aJ^{n(e+1)}}
       w_{1+f}\otimes w_a, & f\,\,\text{odd}.\\
\end{array}\right.
\end{align*}
\end{enumerate}
\end{lemma}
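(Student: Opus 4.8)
The plan is to compute $\bar c(w_a\otimes w_b)=(\varphi_2^{-1}\otimes\varphi_1^{-1})\,c\,\bigl(\varphi_2(w_a)\otimes\varphi_1(w_b)\bigr)$ directly, organizing the work by the parity of $a$, the parity of $b$, whether $b=1$, and the residues $d,f$. First I would record the elementary index bookkeeping for $\varphi_1,\varphi_2$. Write $a'$ for the index with $\varphi_2(w_a)\in\k^{\times}w_{a'}$ and $b'$ for the index with $\varphi_1(w_b)\in\k^{\times}w_{b'}$; thus $a'=a$ if $a$ is odd and $a'=2n+2-a$ if $a$ is even, while $b'=b$ if $b$ is even or $b=1$ and $b'=2n+2-b$ (odd and $\neq 1$) if $b$ is odd and $b\neq 1$. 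Three consequences are used throughout: $a'\equiv a$ and $b'\equiv b\pmod 2$, so $2\mid(a+b)\Leftrightarrow 2\mid(a'+b')$ and the three cases of the statement correspond to the grouping of the clauses of $c$ by $a=1$, by $2\mid(a+b)$, and by $2\nmid(a+b)$; next, $a'=1\Leftrightarrow a=1$, so the $a=1$ clause of $c$ is invoked exactly when $a=1$; finally both $\varphi_1^{-1}$ and $\varphi_2^{-1}$ act on the relevant indices by the same involution $x\mapsto 2n+2-x$, up to the scalars $x_a^{-1},x_b^{-1}$.

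The one step with genuine content is the identity that, after substituting $a',b'$, the quantity $b'+2a'-2$ governing $c$ when $2\mid(a'+b')$ equals $\Psi_E$, and the quantity $2n+1-b'+2a'-2$ governing $c$ when $2\nmid(a'+b')$ equals $\Psi_O$, in each of the six parity/value configurations listed in the definitions of $\Psi_E$ and $\Psi_O$; moreover each of these six expressions is strictly positive, so the divisions $\Psi_E=2nr+d$ and $\Psi_O=2ne+f$ are legitimate and produce exactly the $r,d$ (resp.\ $e,f$) that the original braiding formula uses on $w_{a'}\otimes w_{b'}$. These are half a dozen two-line verifications of the same type already displayed for the subcase $2\mid a$, $2\mid b$ in the proof of Theorem~\ref{NicholsAlgK}.

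With these identities in hand the rest is mechanical substitution. For $a\neq 1$ the output of $c(w_{a'}\otimes w_{b'})$ has the shape (scalar)$\cdot w_{*}\otimes w_{2n-a'+2}$, and $2n-a'+2$ equals $a$ when $a$ is even and $2n+2-a$ (odd, $\neq 1$) when $a$ is odd $\neq 1$; in either case $\varphi_1^{-1}$ sends that slot to $x_a^{-1}w_a$, while the scalar $x_b$ produced by $\varphi_1(w_b)$ passes through $c$ by linearity; for $a=1$ the output is (scalar)$\cdot w_{b'}\otimes w_1$ with $\varphi_1^{-1}(w_1)=x_1^{-1}w_1$. The first slot $w_{2n}$, $w_d$, $w_1$ or $w_{2n+1-f}$ is mapped by $\varphi_2^{-1}$ to $w_2$, to $w_d$ or $w_{2n+2-d}$ according as $d$ is odd or even, to $w_1$, or to $w_{2n+1-f}$ or $w_{1+f}$ according as $f$ is even or odd; since $d\equiv\Psi_E$ and $f\equiv\Psi_O\pmod 2$, this reproduces precisely the dichotomy recorded in the statement. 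The exponents of $\bar{\mu}K$ transform via $2a'\mapsto 2a$ ($a$ odd) or $2a'\mapsto 4n+4-2a$ ($a$ even): for instance $s-ne-2+2a'\mapsto s+n(4-e)+2-2a$ and $s-n(e+1)-2+2a'\mapsto s+n(3-e)+2-2a$, while the exponents $n(r-2),n(r-1),-ne,n(e+1)$ attached to $\lambda$ and $J$ are unchanged; collecting the overall factor $x_bx_a^{-1}$ yields the three displayed formulas.

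The only real obstacle is organizational: there are on the order of fifteen subcases — $a=1$, $a$ even, or $a$ odd $\neq 1$, times the three possibilities for $b$ ($b$ even, $b=1$, $b$ odd $\neq 1$), times the trichotomy $d=0$ / $d$ positive even / $d$ odd (resp.\ the analogous trichotomy for $f$) — and one must be careful not to drop a parity or misapply the involution $x\mapsto 2n+2-x$ when threading it through $c$. No idea beyond the two identities $b'+2a'-2=\Psi_E$, $2n+1-b'+2a'-2=\Psi_O$ and this bookkeeping is needed.
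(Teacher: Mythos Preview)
Your proposal is correct and follows essentially the same approach as the paper: a direct case-by-case computation of $\bar c=(\varphi_2^{-1}\otimes\varphi_1^{-1})c(\varphi_2\otimes\varphi_1)$, split according to the parity of $a$ and the parity/value of $b$, feeding $w_{a'}\otimes w_{b'}$ into the known formula for $c$ and then undoing the involutions. Your organizing identities $b'+2a'-2=\Psi_E$ and $2n-1-b'+2a'=\Psi_O$ are exactly what the paper computes implicitly at the start of each of its six subcases, and the subsequent bookkeeping on indices and exponents matches line for line.
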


\begin{proof}
If $2\mid a$ and $2\mid b$, set 
\[
b+2(2n+2-a)-2=4n+2+b-2a=2nr+d, \quad r\in\{0, 1, 2\}, d\in\overline{0, 2n-1}.
\]
We have  
\begin{align*}
\bar c(w_a\otimes w_b)&=x_b (\varphi_2^{-1}\otimes \varphi_1^{-1})c(w_{2n+2-a}\otimes w_b)\\
&=\left\{\begin{array}{ll}
(-1)^p\lambda^r (\bar{\mu}K)^{s+n(r-2)}J^{n(r-2)}
    x_b (\varphi_2^{-1}\otimes \varphi_1^{-1})(w_{2n}\otimes w_a),& d=0,\\
(-1)^p\lambda^{r+1} (\bar{\mu}K)^{s+n(r-2)}J^{n(r-1)} 
   x_b (\varphi_2^{-1}\otimes \varphi_1^{-1})(w_{d}\otimes w_a),& d>0,\\
\end{array}\right.\\
&=\left\{\begin{array}{ll}
x_bx_a^{-1} (-1)^p\lambda^r (\bar{\mu}K)^{s+n(r-2)}J^{n(r-2)}
    w_{2}\otimes w_a,& d=0,\\
x_bx_a^{-1}(-1)^p\lambda^{r+1} (\bar{\mu}K)^{s+n(r-1)}J^{n(r-1)} 
   w_{2n+2-d}\otimes w_a,& d>0.\\
\end{array}\right.
\end{align*}

\par\noindent
If $2\mid a$ and $b=1$, set 
\[
2n+1-b+2(2n+2-a)-2=6n+2-2a=2ne+f, \quad e\in\{0, 1, 2\}, f\in\overline{0, 2n-1}.
\] 
We have
\begin{align*}
\bar c(w_a\otimes w_b)&=x_1 (\varphi_2^{-1}\otimes \varphi_1^{-1})c(w_{2n+2-a}\otimes w_1)\\
&=\left\{\begin{array}{ll}
\frac{x_1(-1)^p\lambda^e(\bar{\mu}K)^{s-ne-2+2(2n+2-a)}}{J^{ne}}(\varphi_2^{-1}\otimes \varphi_1^{-1})
(w_1\otimes w_a), &f=0,\vspace{1mm}\\ 
\frac{x_1(-1)^p\lambda^{e+1}(\bar{\mu}K)^{s-n(e+1)-2+2(2n+2-a)}}
{J^{n(e+1)}}(\varphi_2^{-1}\otimes \varphi_1^{-1})
(w_{2n+1-f}\otimes w_a), &f>0,\\ 
\end{array}\right.\\
&=\left\{\begin{array}{ll}
x_1x_a^{-1}(-1)^p\lambda^e(\bar{\mu}K)^{s+n(4-e)+2-2a}J^{-ne}
w_1\otimes w_a, &f=0,\\ 
\frac{x_1(-1)^p\lambda^{e+1}(\bar{\mu}K)^{s+n(3-e)+2-2a}}
{x_a J^{n(e+1)}}w_{2n+1-f}\otimes w_a, &f>0.\\ 
\end{array}\right.
\end{align*}
\par\noindent 
If $2\mid a$, $b\neq 1$ and $2\nmid b$, set
\[
2n+1-(2n+2-b)+2(2n+2-a)-2=4n+1+b-2a=2ne+f, 
\]
where $e\in\{0, 1, 2\}$, $f\in\overline{0, 2n-1}$.
\begin{align*}
\bar c(w_a\otimes w_b)&=x_b (\varphi_2^{-1}\otimes \varphi_1^{-1})c(w_{2n+2-a}\otimes w_{2n+2-b})\\
&=\left\{\begin{array}{ll}
x_b(-1)^p\lambda^e(\bar{\mu}K)^{s-ne-2+2(2n+2-a)}J^{-ne}(\varphi_2^{-1}\otimes \varphi_1^{-1})
(w_1\otimes w_a), &f=0,\\ 
\frac{x_b(-1)^p\lambda^{e+1}(\bar{\mu}K)^{s-n(e+1)-2+2(2n+2-a)}}
{J^{n(e+1)}}(\varphi_2^{-1}\otimes \varphi_1^{-1})
(w_{2n+1-f}\otimes w_a), &f>0,\\ 
\end{array}\right.\\
&=\left\{\begin{array}{ll}
x_bx_a^{-1}(-1)^p\lambda^e(\bar{\mu}K)^{s+n(4-e)+2-2a}J^{-ne}
w_1\otimes w_a, &f=0,\\ 
\frac{x_b(-1)^p\lambda^{e+1}(\bar{\mu}K)^{s+n(3-e)+2-2a}}
{x_a J^{n(e+1)}}w_{2n+1-f}\otimes w_a, &f>0.\\ 
\end{array}\right.
\end{align*}
\par\noindent
Let $2\nmid a$, $2\mid b$. If $a\neq 1$,  set 
\[
2n-1+2a-b=2ne+f, \quad e\in\{0, 1, 2\}, f\in\overline{1,2n-1}.
\]
\begin{align*}
\bar c(w_a\otimes w_b)&=x_b (\varphi_2^{-1}\otimes \varphi_1^{-1})c(w_{a}\otimes w_b)\\
&=\left\{\begin{array}{ll}
x_b(-1)^p(\bar{\mu}K)^s  (\varphi_2^{-1}\otimes \varphi_1^{-1})(w_{b}\otimes w_1), &a=1,\\
\frac{x_b(-1)^p\lambda^{e+1}(\bar{\mu}K)^{s-n(e+1)-2+2a}}
{J^{n(e+1)}}(\varphi_2^{-1}\otimes \varphi_1^{-1})
(w_{2n+1-f}\otimes w_{2n+2-a}), &f>0,\\
\end{array}\right.\\
&=\left\{\begin{array}{ll}
x_bx_1^{-1}(-1)^p(\bar{\mu}K)^s w_{2n+2-b}\otimes w_1, 
&a=1,\vspace{1mm}\\
\frac{x_b(-1)^p\lambda^{e+1}(\bar{\mu}K)^{s-n(e+1)-2+2a}}
{x_aJ^{n(e+1)}}w_{1+f}\otimes w_a, &f>0.\\
\end{array}\right.
\end{align*}
\noindent
Let $2\nmid a$ and $b=1$. If $a\neq 1$,   set $2a-1=2nr+d$, $r\in\{0,1\}$, $d\in\overline{1, 2n-1}$.
\begin{align*}
\bar c(w_a\otimes w_b)&=x_1 (\varphi_2^{-1}\otimes \varphi_1^{-1})c(w_{a}\otimes w_1)\\
&=\left\{\begin{array}{ll}
x_1(-1)^p(\bar{\mu}K)^s (\varphi_2^{-1}\otimes \varphi_1^{-1})(w_{1}\otimes w_1), &a=1,
\vspace{1mm}\\
x_1(-1)^p\lambda^{r+1}(\bar{\mu}K)^{s+n(r-1)}J^{n(r-1)} \left(\varphi_2^{-1}(w_{d})
   \otimes \varphi_1^{-1}(w_{2n+2-a})\right), &d>0,\\
\end{array}\right.\\
&=\left\{\begin{array}{ll}
(-1)^p(\bar{\mu}K)^s w_{1}\otimes w_1, &a=1,\\
x_1x_a^{-1}(-1)^p\lambda^{r+1}(\bar{\mu}K)^{s+n(r-1)}J^{n(r-1)} w_{d}
   \otimes w_a, &d>0.\\
\end{array}\right.
\end{align*}
\par\noindent
Let $2\nmid a$, $b\neq 1$ and $2\nmid b$. If $a\neq 1$,  set
\[
(2n+2-b)+2a-2=2n+2a-b=2nr+d, \quad r\in\{0, 1, 2\}, d\in\overline{1, 2n-1}.
\]
\begin{align*}
\bar c(w_a\otimes w_b)&=x_b (\varphi_2^{-1}\otimes \varphi_1^{-1})c(w_{a}\otimes w_{2n+2-b})\\
&=\left\{\begin{array}{ll}
x_b(-1)^p(\bar{\mu}K)^s (\varphi_2^{-1}\otimes \varphi_1^{-1})(w_{2n+2-b}\otimes w_1), &a=1,\\
x_b(-1)^p\lambda^{r+1}(\bar{\mu}K)^{s+n(r-1)}J^{n(r-1)} \left(\varphi_2^{-1}(w_{d})
   \otimes \varphi_1^{-1}(w_{2n+2-a})\right), &d>0,\\
\end{array}\right.\\
&=\left\{\begin{array}{ll}
x_bx_1^{-1}(-1)^p(\bar{\mu}K)^s w_{2n+2-b}\otimes w_1, &a=1,\\
x_bx_a^{-1}(-1)^p\lambda^{r+1}(\bar{\mu}K)^{s+n(r-1)}J^{n(r-1)} w_{d}
   \otimes w_{a}, &d>0.\\
\end{array}\right.
\end{align*}
\end{proof}

\begin{lemma}\label{NicholsEvenK_tildeC}
Let $\tilde c=(\varphi_1^{-1}\otimes \varphi_2^{-1})c(\varphi_1\otimes \varphi_2)$, and  $\Psi_E=2nr+d$, $\Psi_O=2ne+f$, where $r, e\in\Bbb N$,  $d, f\in\overline{0, 2n-1}$, and 
\begin{align*}
\Psi_E&=\left\{\begin{array}{ll}
2n+2a-b, & a\,\,\text{even},  b\,\,\text{even},\\
4n+2+b-2a, & 1\neq a\,\,\text{odd}, b\,\,\text{odd},\\
\end{array}\right.\\
\Psi_O&=\left\{\begin{array}{ll}
2n-1+2a-b, & a\,\,\text{even},  b\,\,\text{odd},\\
4n+1+b-2a, &1\neq a\,\,\text{odd},  b\,\,\text{even}.\\
\end{array}\right.
\end{align*}
\begin{enumerate}
\item If $a=1$, then 
\begin{align*}
\tilde c(w_a\otimes w_b)=\left\{\begin{array}{ll}
x_1x_{2n+2-b}^{-1}(-1)^p(\bar{\mu}K)^s w_{2n+2-b}\otimes w_1, 
   & b\,\,\text{even or } 1\neq b\,\,\text{odd},\vspace{1mm}\\
(-1)^p  (\bar{\mu}K)^s w_1\otimes w_1, & b=1.  
\end{array}\right.
\end{align*}
\item If $a\neq 1$ and $2\mid (a+b)$, then
\begin{align*}
\tilde c(w_a\otimes w_b)=\left\{\begin{array}{ll}
x_ax_{2n}^{-1}(-1)^p\lambda^r(\bar{\mu}K)^{s+n(r-2)}J^{n(r-2)} w_{2n}\otimes w_a, & d=0,\vspace{1mm}\\
x_ax_{d}^{-1}(-1)^p\lambda^{r+1}(\bar{\mu}K)^{s+n(r-1)}J^{n(r-1)} w_{d}\otimes w_a, 
    & 0\neq d\,\,\text{even},\vspace{1mm}\\
x_ax_{1}^{-1}(-1)^p\lambda^{r+1}(\bar{\mu}K)^{s+n(r-1)}J^{n(r-1)} w_{1}\otimes w_a, 
    &d=1,\vspace{1mm}\\
\dfrac{x_a(-1)^p\lambda^{r+1}(\bar{\mu}K)^{s+n(r-1)}J^{n(r-1)}}
          {x_{2n+2-d}}    w_{2n+2-d}\otimes w_a, 
          &1\neq d\,\,\text{odd}.
\end{array}\right.
\end{align*}
\item If $a\neq 1$ and $2\nmid (a+b)$, then
\begin{align*}
\tilde c(w_a\otimes w_b)=\left\{\begin{array}{ll}
x_ax_{1}^{-1}(-1)^p\lambda^e(\bar{\mu}K)^{s-ne+2a-2}J^{-ne} w_{1}\otimes w_a, & f=0,\vspace{1mm}\\
\dfrac{x_a(-1)^p\lambda^{e+1}(\bar{\mu}K)^{s-n(e+1)+2a-2}}
          {x_{f+1}J^{n(e+1)}} w_{f+1}\otimes w_a, 
    & 0\neq f\,\,\text{even},\vspace{1mm}\\
\dfrac{x_a(-1)^p\lambda^{e+1}(\bar{\mu}K)^{s+n(3-e)-2a+2}}
          {x_{2n+1-f}J^{n(e+1)}}    w_{2n+1-f}\otimes w_a, 
          &f\,\,\text{odd}.
\end{array}\right.
\end{align*}
\end{enumerate}
\end{lemma}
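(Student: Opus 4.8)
The plan is to compute $\tilde c(w_a\otimes w_b)=(\varphi_1^{-1}\otimes\varphi_2^{-1})\,c\,(\varphi_1\otimes\varphi_2)(w_a\otimes w_b)$ directly, organized by the parities of $a$ and $b$, in complete analogy with the proof of Lemma~\ref{NicholsEvenK_barC}. Since $\varphi_1$ acts on the first tensor leg and $\varphi_2$ on the second, $(\varphi_1\otimes\varphi_2)(w_a\otimes w_b)=\varphi_1(w_a)\otimes\varphi_2(w_b)$, where $\varphi_1(w_a)=x_aw_a$ when $a=1$ or $a$ is even, $\varphi_1(w_a)=x_aw_{2n+2-a}$ when $a\neq1$ is odd, and $\varphi_2(w_b)=w_b$ when $b$ is odd, $\varphi_2(w_b)=w_{2n+2-b}$ when $b$ is even (the $\varphi_2$ side contributes no scalar). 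Each case then reduces to substituting the possibly index-flipped pair into the explicit piecewise braiding $c$ of $\mathscr{K}_{jk,p}^{s}$ recalled in Section~\ref{section_Nichos_K}, acting on the output by $\varphi_1^{-1}\otimes\varphi_2^{-1}$, and collecting scalars.

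The case $a=1$ is immediate: $\varphi_1(w_1)=x_1w_1$, the first line of the formula for $c$ moves $\varphi_2(w_b)$ to the front with coefficient $(-1)^p(\bar{\mu}K)^s$, then $\varphi_2^{-1}$ fixes $w_1$ while $\varphi_1^{-1}$ rescales the new first leg by $x_{2n+2-b}^{-1}$, giving part~(1). For $a\neq1$ I would split on the parity of $a+b$; this parity is preserved under simultaneously flipping both indices, so the (flipped) pair lands in a branch of $c$ of the matching type. The key observation is that feeding $a'\in\{a,2n+2-a\}$ in the first slot and $b'\in\{b,2n+2-b\}$ in the second makes the quantities $b+2a-2$ and $2n+1-b+2a-2$ that govern $c$ collapse, after substitution, exactly to $\Psi_E$ and $\Psi_O$ as defined in the statement (for instance $b'+2a'-2$ becomes $2n+2a-b$ when $a,b$ are even and $4n+2+b-2a$ when $a,b$ are odd $\neq1$). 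Writing $\Psi_E=2nr+d$, $\Psi_O=2ne+f$ with $d,f\in\overline{0,2n-1}$ selects the branch, and moreover the parity of $a+b$ forces $d$ even in the even--even subcase and odd in the odd--odd subcase, and $f$ even when $a$ is even and odd when $a$ is odd; this is precisely why the listed subcases $d=0$, $0\neq d$ even, $d=1$, $1\neq d$ odd (resp. $f=0$, $0\neq f$ even, $f$ odd) exhaust all possibilities. Finally $\varphi_1^{-1}\otimes\varphi_2^{-1}$ is applied: for $a\neq1$ the second output leg is always $w_{2n-a'+2}$, which $\varphi_2^{-1}$ returns to $w_a$ (fixed when $a$ is odd, flipped back when $a$ is even), while the first output leg $w_{2n}$, $w_d$, $w_1$, $w_{2n+1-f}$, etc., is rescaled by $\varphi_1^{-1}$ according to its own parity, yielding the ratios $x_ax_{2n}^{-1}$, $x_ax_d^{-1}$, $x_ax_1^{-1}$, $x_ax_{2n+1-f}^{-1}$, $x_ax_{2n+2-d}^{-1}$, $x_ax_{f+1}^{-1}$ that appear in parts~(2) and~(3).

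The only real difficulty is arithmetic stamina: one must carry the exponents of $\lambda$, $\bar{\mu}K$ and $J$ through the substitution $2a'=4n+4-2a$ and check, using $J^{4n}=1$, that they reduce to the forms $s-ne+2a-2$, $s-n(e+1)+2a-2$, $s+n(3-e)-2a+2$ and their even-case analogues written in the lemma — which they do precisely because of the parity restrictions on $d$ and $f$ just noted. I expect no conceptual obstacle here; the hazard is purely a dropped sign or a mis-tracked power of $J$, so the prudent course is to write out parts~(1)--(3) line by line, as below. Once the formula is in hand, Lemma~\ref{MainLemma} applies to this pair $(\varphi_1,\varphi_2)$, so that $(\mathscr{K}_{jk,p}^{s},c)$ and $(\mathscr{K}_{jk,p}^{s},\tilde c)$ are t-equivalent, which is what is used in the proof of Theorem~\ref{NicholsAlgK}.
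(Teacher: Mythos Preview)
Your proposal is correct and is exactly the approach the paper takes: the paper's own proof consists of the single sentence ``It is a case by case verification similar to the proof of Lemma~\ref{NicholsEvenK_barC},'' and your outline spells out precisely that verification, with the same parity split on $(a,b)$, the same substitution of $a'=2n+2-a$ or $b'=2n+2-b$ into the branches of $c$, and the same bookkeeping of the $\varphi_1^{-1}$-scalar on the first output leg. Nothing further is needed.
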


\begin{proof}
It is a case by case verification similar to the proof of Lemma \ref{NicholsEvenK_tildeC}.
\end{proof}

\subsection{The Nichols algebras $\mathfrak{B}\left(\mathscr{N}_{k,pq}^s\right)$}
Let $s\in\overline{1,N}$, 
    $k\in\overline{0,N-1}$, $p,q\in\Bbb{Z}_2$, and $\mathscr{N}_{k,pq}^s$ be the
$(2n+1)$-dimensional 
simple Yetter-Drinfeld module 
over $A_{N\, 2n+1}^{\mu\lambda}$ as defined in \cite[Appendix]{Shi2020odd}.
 Set $B=\bar{\mu}^{\frac12}\omega^{2k(2n+1)}$, where the parameter $\omega$ is a $8N(2n+1)$-th primitive 
 root of unity. 
 We define two maps $R^{\gamma }, L^{\gamma }: \mathscr{N}_{k,pq}^s\otimes \mathscr{N}_{k,pq}^s
\longrightarrow \mathscr{N}_{k,pq}^s\otimes \mathscr{N}_{k,pq}^s$ such that 
\begin{align*}
R^{\gamma }(w_\alpha\otimes w_\beta)
&=\left\{\begin{array}{rl}
w_{\beta+\gamma }\otimes w_{2n-\alpha+2},
&\beta+\gamma \leq 2n+1,\\
(-1)^p\lambda B
w_{2n+1}\otimes w_{2n-\alpha+2},
&\beta+\gamma =2n+2,\\
(-1)^p\lambda B^{2(\gamma +\beta)-4n-3}
w_{4n+3-\gamma -\beta}\otimes w_{2n-\alpha+2},
&\beta+\gamma\geq 2n+3, \\
\end{array}\right.
\\
L^{\gamma }(w_\alpha\otimes w_\beta)
&=\left\{\begin{array}{rl}
B^{2\gamma}
w_{\beta-\gamma}\otimes w_{2n-\alpha+2}, 
&\gamma<\beta,\vspace{1mm}\\
(-1)^p B^{2\beta-1}
w_{\gamma-\beta+1}\otimes w_{2n-\alpha+2}, 
&\gamma\in\overline{\beta,\beta+2n}.\\
\end{array}\right.
\end{align*}
Then the braiding of $\mathfrak{B}\left(\mathscr{N}_{k,pq}^s\right)$ can 
be described as follows \cite[Section 4.4]{Shi2020odd}. 
\begin{enumerate}
\item If $\alpha=n+1$, then 
\[
c(w_\alpha\otimes w_\beta)
=(-1)^q B^{2(\alpha+s-1)}
w_\beta\otimes w_{2n-\alpha+2};
\]
\item If $\alpha<n+1$, 
          $\alpha+\beta \equiv 0\mod 2$,
         then 
\[
c(w_\alpha\otimes w_\beta)
=(-1)^q B^{2(2\alpha+s-n-2)}
L^{2(n-\alpha+1)}(w_\alpha\otimes w_\beta);
\]
\item If $\alpha>n+1$, 
          $\alpha+\beta\equiv 1\mod 2$,
         then 
\[
c(w_\alpha\otimes w_\beta)
=(-1)^q B^{2(s+n)}
L^{2(\alpha-1-n)}(w_\alpha\otimes w_\beta);
\]

\item If $\alpha<n+1$, 
          $\alpha+\beta \equiv 1\mod 2$,
         then 
\[
c(w_\alpha\otimes w_\beta)
=(-1)^q B^{2(2\alpha+s-n-2)}
R^{2(n-\alpha+1)}(w_\alpha\otimes w_\beta);
\]
\item If $\alpha>n+1$, 
          $\alpha+\beta\equiv 0\mod 2$,
         then 
\[
c(w_\alpha\otimes w_\beta)
=(-1)^q B^{2(s+n)}
R^{2(\alpha-1-n)}(w_\alpha\otimes w_\beta).
\]
\end{enumerate}

\begin{theorem}\label{NicholsAlgN}
The braided vector space $\left(\mathscr{N}_{k,pq}^s, c\right)$ is t-equivalent to a braided vector space 
associated to dihedral rack $D_{2n+1}$. 
\end{theorem}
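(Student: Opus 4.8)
The plan is to mirror the proof of Theorem \ref{NicholsAlgK}. The structural observation that makes the statement plausible is that in each of the five defining cases of $c$, and in each branch of the maps $R^\gamma$ and $L^\gamma$, the second tensor factor of $c(w_\alpha\otimes w_\beta)$ is $w_{2n-\alpha+2}$, independent of $\beta$. Writing $X=\{1,\dots,2n+1\}$, identified with $\Bbb Z_{2n+1}$, this says that $(\mathscr{N}_{k,pq}^s,c)=W_{X,r}$ (Definition \ref{BraidedVectorSpace}) for the non-degenerate set-theoretical solution $r(\alpha,\beta)=(\sigma_\alpha(\beta),\,2n+2-\alpha)$, where $\mu\mapsto 2n+2-\mu$ is the involution of $\Bbb Z_{2n+1}$ fixing $n+1$ and $\sigma_\alpha$ is the (piecewise affine in $\beta$) map one reads off from the case list together with the definitions of $R^\gamma$ and $L^\gamma$. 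Since here $\tau_\beta$ does not depend on $\beta$ and is involutive, Example \ref{SetRackRelations} applies: the associated rack is $\alpha\rhd\beta=2n+2-\sigma_{2n+2-\alpha}(\beta)$, and evaluating $\sigma$ from the formulas — using $-2n\equiv 1$ and $2n+2-\mu\equiv 1-\mu$ in $\Bbb Z_{2n+1}$ — gives $\alpha\rhd\beta\equiv 2\alpha-\beta\pmod{2n+1}$, i.e.\ the dihedral rack $\Bbb D_{2n+1}$ of Example \ref{TypeD}. What is left is to upgrade this to a t-equivalence by a change of basis of product form, so that Lemma \ref{MainLemma} applies.

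To that end I would introduce invertible maps $\varphi_1,\varphi_2\colon\mathscr{N}_{k,pq}^s\to\mathscr{N}_{k,pq}^s$ that are scaled reflections of the basis, defined piecewise by the parity of the index — so that each preserves its parity class, since $\mu\mapsto 2n+2-\mu$ does — in exact analogy with the maps used for $\mathscr{K}_{jk,p}^s$: one of them sends $w_a$ to a nonzero multiple of $w_{2n+2-a}$ when $a$ is even and scales $w_a$ when $a$ is odd, and the other does the opposite; the scalars $x_\alpha\in\k^\times$ are taken to be explicit powers of $B$ times $x_1$, as the choice $x_{2k}=x_1(J^{-1}K\bar{\mu})^{n-2k+1}$ was there. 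Put $\bar c=(\varphi_2^{-1}\otimes\varphi_1^{-1})c(\varphi_2\otimes\varphi_1)$ and $\tilde c=(\varphi_1^{-1}\otimes\varphi_2^{-1})c(\varphi_1\otimes\varphi_2)$. Pushing the reflections through the piecewise formula for $c$ and through the branches of $R^\gamma$ and $L^\gamma$ yields closed descriptions of $\bar c$ and $\tilde c$ on every $w_a\otimes w_b$, organized by the position of $a$ relative to $n+1$ and the parity of $a+b$; these are the analogues of Lemmas \ref{NicholsEvenK_barC} and \ref{NicholsEvenK_tildeC}, with congruences playing the same bookkeeping role as $\Psi_E$, $\Psi_O$ there.

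Next I would check the hypothesis $\tilde c=\bar c$ of Lemma \ref{MainLemma} by comparing the two closed descriptions term by term. This splits into a combinatorial check that the first-factor indices agree in $\Bbb Z_{2n+1}$ — here the ``wrap-around'' branches of $L^\gamma$, $R^\gamma$ are precisely what the parity-adapted splitting of $\varphi_1,\varphi_2$ is designed to neutralise — and a check that the powers of $B$ and the signs $(-1)^p$, $(-1)^q$ agree, which forces a system of relations among the $x_\alpha$ of the shape $x_a^2=x_b\, x_{(\cdots)}\cdot(\text{power of }B)$; with the explicit choice of the $x_\alpha$ these reduce, exactly as in the $\mathscr{K}$ computation, to elementary exponent arithmetic (a telescoping product) and hold identically. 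Once $\tilde c=\bar c$ is established, Lemma \ref{MainLemma} gives that $(\mathscr{N}_{k,pq}^s,c)$ and $(\mathscr{N}_{k,pq}^s,\bar c)$ are t-equivalent.

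Finally one reads off from the closed formula for $\bar c$ that $\bar c(w_i\otimes w_j)=q_{ij}\,w_{2i-j}\otimes w_i$ with all indices in $\Bbb Z_{2n+1}$ and $q_{ij}\in\k^\times$, i.e.\ $(\mathscr{N}_{k,pq}^s,\bar c)$ is exactly a braided vector space associated to the dihedral rack $\Bbb D_{2n+1}$ (the $2$-cocycle identity for $q$ being automatic since $\bar c$ satisfies the braid equation), which is the assertion. I expect the genuine difficulty to be the first design step — pinning down the correct parity-adapted reflections $\varphi_1,\varphi_2$ and scalars $x_\alpha$: a naive single reflection sends one branch of $L^\gamma$ (or $R^\gamma$) to the desired $w_{2i-j}$ but the complementary branch to its mirror $w_{1-(2i-j)}$, so getting the piecewise definition right is essential — and then carrying the ensuing index arithmetic modulo $2n+1$ through all five cases of $c$; once the maps are chosen correctly, the scalar matching is routine.
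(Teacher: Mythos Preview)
Your proposal is correct and follows essentially the same route as the paper: define two parity-adapted reflection maps $\varphi_1,\varphi_2$ on the basis (in the paper $\varphi_1$ reflects the odd indices and scales the even ones by $x_a$, while $\varphi_2$ reflects the even indices and fixes the odd ones, with the explicit choice $x_{2k}=\lambda x_1 B^{4n-4k+2}$, $x_{2k+1}=x_1 B^{4k}$), compute $\tilde c$ and $\bar c$ case by case, verify $\tilde c=\bar c$ by matching powers of $B$ and signs, and then invoke Lemma~\ref{MainLemma} to conclude that $(\mathscr{N}_{k,pq}^s,\tilde c)$ is of dihedral rack type $\Bbb D_{2n+1}$. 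Your preliminary observation via Example~\ref{SetRackRelations} --- that $\tau_\beta(\alpha)=2n+2-\alpha$ is $\beta$-independent so the conjugating map $T$ is already of product form and the derived rack is $\alpha\rhd\beta\equiv 2\alpha-\beta$ --- is a nice piece of motivation not spelled out in the paper, but the actual mechanism you outline coincides with the paper's proof.
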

\begin{proof}
Set   $x_{2k}=\lambda x_1 B^{4n-4k+2}$ and 
$x_{2k+1}=x_1 B^{4k}$ for  $k\in\overline{1, n}$, 
then $\tilde c=\bar c$. 
It is a case by case verification to prove $\tilde c=\bar c$, see Lemmas \ref{NicholsOddN_tildeC}
and \ref{NicholsOddN_BarC}.  
For example, if $a<n+1$, $2\mid a$, $2\nmid b$, then 
\begin{align*}
\tilde c(w_a\otimes w_b)&=\left\{\begin{array}{ll}
\dfrac{x_a(-1)^qB^{4a+2s-2n-4}}{x_{2a-b}} w_{2a-b}\otimes w_a, & 2a-b>0,\vspace{1mm}\\
\dfrac{x_a(-1)^{p+q}\lambda B^{-2n-3+2b+2s}}{x_{2n+1+2a-b}} w_{2n+1+2a-b}\otimes w_a, & 2a-b\leq 0,\\
\end{array}\right. 
\end{align*}
\begin{align*}
\bar c(w_a\otimes w_b)&=\left\{\begin{array}{ll}
\dfrac{x_b(-1)^qB^{6n+4-4a+2s}}{x_{a}} w_{2a-b}\otimes w_a, & 2a-b>0,\vspace{1mm}\\
\dfrac{x_b(-1)^{p+q} B^{6n+3-2b+2s}}{x_{a}} w_{2n+1+2a-b}\otimes w_a, & 2a-b< 0.\\
\end{array}\right. 
\end{align*}
If $2a-b>0$, then we have
\[
\dfrac{x_a^2}{x_{2a-b}x_b}
=\dfrac{\left[\lambda x_1B^{4n-2a+2}\right]^2}{x_1B^{2(2a-b-1)}\cdot x_1B^{2(b-1)}}
=B^{8n+8-8a}.
\]
If $2a-b<0$, then we have 
\[
\dfrac{x_a^2}{x_{2n+1+2a-b}x_b}
=\dfrac{\left[\lambda x_1B^{4n-2a+2}\right]^2}
{\lambda x_1B^{4n-2(2n+1+2a-b)+2}\cdot x_1B^{2(b-1)}}
=B^{8n+6-4b}.
\]
It is easy to see that $\tilde c(w_a\otimes w_b)=\bar c(w_a\otimes w_b)$ in case 
$a<n+1$, $2\mid a$ and $2\nmid b$.

According to Lemma \ref{MainLemma}, 
$\left(\mathscr{N}_{k,pq}^s, c\right)$ and 
$\left(\mathscr{N}_{k,pq}^s, \tilde c\right)$ are t-equivalent. From  Lemma \ref{NicholsOddN_tildeC}, it is easy to see that the braided vector space 
$\left(\mathscr{N}_{k,pq}^s, \tilde c\right)$ is associated to dihedral rack $D_{2n+1}$. 
\end{proof}

\par\noindent
Define two invertible maps $\varphi_i: \mathscr{N}_{k,pq}^s\rightarrow \mathscr{N}_{k,pq}^s$ 
for $i=1, 2$, via 
\begin{align*}
\varphi_1(w_a)&=\left\{\begin{array}{ll}
x_aw_{2n+2-a}, & a \,\,\text{odd, }x_a\in\k^{\times}, \\
x_a w_a, & a \,\,\text{even, } x_a\in\k^{\times}, \\
\end{array}\right. \\
\varphi_2(w_a)&=\left\{\begin{array}{ll}
w_{a}, & a \,\,\text{odd}, \\
w_{2n+2-a}, & a \,\,\text{even}.\\
\end{array}\right. 
\end{align*}

\begin{lemma}\label{NicholsOddN_tildeC}
Let $\tilde c=(\varphi_1^{-1}\otimes \varphi_2^{-1})c(\varphi_1\otimes \varphi_2)$. 
\begin{enumerate}
\item If $a=n+1$, then 
\begin{align*}
\tilde c(w_a\otimes w_b)=(-1)^qx_{n+1}x_{2n+2-b}^{-1}B^{2n+2s} w_{2n+2-b}\otimes w_{n+1}.
\end{align*}
\item If $a<n+1$, $2a-b>0$, then 
\begin{align*}
\tilde c(w_a\otimes w_b)&=\left\{\begin{array}{ll}
\dfrac{x_a(-1)^qB^{2n+2s}}{x_{2a-b}}w_{2a-b}\otimes w_a, & 2\mid (a+b),\vspace{1mm}\\
\dfrac{x_a(-1)^qB^{4a+2s-2n-4}}{x_{2a-b}}w_{2a-b}\otimes w_a, & 2\mid a, 2\nmid b,\vspace{1mm}\\
\dfrac{x_a(-1)^qB^{6n+4+2s-4a}}{x_{2a-b}}w_{2a-b}\otimes w_a, & 2\nmid a, 2\mid b.\\
\end{array}\right.
\end{align*}

\item If $a<n+1$, $2a-b\leq 0$, then 
\begin{align*}
\tilde c(w_a\otimes w_b)&=\left\{\begin{array}{ll}
\dfrac{x_a(-1)^{p+q}B^{2n-1+4a-2b+2s}}{x_{2n+1+2a-b}} 
          w_{2n+1+2a-b}\otimes w_a, & 2\mid a, 2\mid b,\vspace{1mm}\\
\dfrac{x_a(-1)^{p+q}\lambda B^{2n+1-4a+2b+2s}}{x_{2n+1+2a-b}} 
          w_{2n+1+2a-b}\otimes w_a, & 2\nmid a, 2\nmid b,\vspace{1mm}\\
\dfrac{x_a(-1)^{p+q}\lambda B^{-2n-3+2b+2s}}{x_{2n+1+2a-b}} 
          w_{2n+1+2a-b}\otimes w_a, & 2\mid a, 2\nmid b,\vspace{1mm}\\     
\dfrac{x_a(-1)^{p+q} B^{6n+3-2b+2s}}{x_{2n+1+2a-b}} 
          w_{2n+1+2a-b}\otimes w_a, & 2\nmid a, 2\mid b.\\               
\end{array}\right.
\end{align*}

\item If $a>n+1$, $2a-b\leq 2n+1$, then 
\begin{align*}
\tilde c(w_a\otimes w_b)&=\left\{\begin{array}{ll}
x_ax_{2a-b}^{-1}(-1)^qB^{2n+2s}w_{2a-b}\otimes w_a, &2\mid (a+b), \vspace{1mm}\\
\dfrac{x_a(-1)^qB^{-2n-4+4a+2s}}{x_{2a-b}}w_{2a-b}\otimes w_a, 
             & 2\mid a, 2\nmid b,\vspace{1mm}\\
\dfrac{x_a(-1)^qB^{6n+4-4a+2s}}{x_{2a-b}}w_{2a-b}\otimes w_a, 
             & 2\nmid a, 2\mid b.\\
\end{array}\right.
\end{align*}

\item If $a>n+1$, $2a-b= 2n+2$, then 
\begin{align*}
\tilde c(w_a\otimes w_b)&=\left\{\begin{array}{ll}
\dfrac{x_a(-1)^{p+q}\lambda B^{2n+1+2s}}{x_1} w_1\otimes w_a, & 2\mid a, 2\mid b,\vspace{1mm}\\
\dfrac{x_a(-1)^{p+q}\lambda B^{6n+5-4a+2s}}{x_1} w_1\otimes w_a, & 2\nmid a, 2\mid b.\\
\end{array}\right.
\end{align*}

\item If $a>n+1$, $2a-b >2n+2$, then 
\begin{align*}
\tilde c(w_a\otimes w_b)&=\left\{\begin{array}{ll}
\dfrac{x_a(-1)^{p+q}\lambda B^{-2n-3+4a-2b+2s}}{x_{2a-b-2n-1}}
        w_{2a-b-2n-1}\otimes w_a, & 2\mid a, 2\mid b, \vspace{1mm}\\
\dfrac{x_a(-1)^{p+q} B^{6n+3-4a+2b+2s}}{x_{2a-b-2n-1}}
        w_{2a-b-2n-1}\otimes w_a, & 2\nmid a, 2\nmid b, \vspace{1mm}\\ 
\dfrac{x_a(-1)^{p+q} B^{2n-1+2b+2s}}{x_{2a-b-2n-1}}
        w_{2a-b-2n-1}\otimes w_a, & 2\mid a, 2\nmid b, \vspace{1mm}\\    
\dfrac{x_a(-1)^{p+q}\lambda B^{2n+1-2b+2s}}{x_{2a-b-2n-1}}
        w_{2a-b-2n-1}\otimes w_a, & 2\nmid a, 2\mid b.\\                   
\end{array}\right.
\end{align*}
\end{enumerate}
\end{lemma}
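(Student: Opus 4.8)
The plan is to compute $\tilde c(w_a\otimes w_b)$ straight from the definition $\tilde c=(\varphi_1^{-1}\otimes\varphi_2^{-1})c(\varphi_1\otimes\varphi_2)$, reducing it in every case to a \emph{single} evaluation of the braiding $c$ of $\mathscr{N}_{k,pq}^s$ on a reflected pair of basis vectors. First I would record the elementary bookkeeping: $\varphi_2$ is an involution, $\varphi_1$ multiplies $w_a$ by $x_a\in\k^\times$ and reflects the index $a\mapsto 2n+2-a$ precisely when $a$ is odd, and both reflections preserve the parity of the index since $2n+2$ is even (in particular each fixes the index $n+1$). Because $c(w_\alpha\otimes w_\beta)$ always has second tensor factor $w_{2n-\alpha+2}=w_{2n+2-\alpha}$, applying $\varphi_1\otimes\varphi_2$ to $w_a\otimes w_b$, then $c$, then $\varphi_2^{-1}$ on the second slot returns that slot exactly to $w_a$; this explains the uniform shape ``$(\text{scalar})\,w_{?}\otimes w_a$'' of every line of the statement, and forces the scalar to be $x_a\,x_j^{-1}$ times the scalar produced by $c$, where $w_j$ is the surviving first factor after $\varphi_1^{-1}$.

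Next I would split according to the position of $a$ relative to $n+1$ and the parity of $a+b$, since these determine which of the five branches in the description of $c$ applies to the reflected first argument $\varphi_1(w_a)$, and whether the auxiliary map entering that branch is $R^{\gamma}$ or $L^{\gamma}$ with $\gamma=2(n-\alpha+1)$ or $\gamma=2(\alpha-1-n)$ (after substituting the reflected $\alpha$). Within each branch the remaining subcases $-$ $2a-b>0$ versus $\le 0$, or $2a-b\le 2n+1$, $=2n+2$, $>2n+2$ $-$ are exactly the thresholds $\beta\pm\gamma$ against $2n+1$ and $2n+2$ appearing in the piecewise definitions of $R^{\gamma}$ and $L^{\gamma}$, once $\alpha,\beta$ there are replaced by the reflected indices $\alpha'\in\{a,2n+2-a\}$ and $\beta'\in\{b,2n+2-b\}$. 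So each subcase collapses to substituting $(\alpha',\beta',\gamma)$ into one line of the definition of $R^\gamma$ or $L^\gamma$, applying $\varphi_1^{-1}\otimes\varphi_2^{-1}$ to the resulting tensor, and collecting the $x$-factors together with the power of $B$, the signs $(-1)^p,(-1)^q$, and the factors of $\lambda$.

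The last step is purely arithmetic: simplify the exponent of $B$ using the order relations for $B$ and $\bar\mu$ recorded in \cite[Appendix]{Shi2020odd} (so that, e.g., $2(\alpha'+s-1)$ or $2(2\alpha'+s-n-2)$ plus the exponents contributed by $R^{\gamma}/L^{\gamma}$ reduce to $2n+2s$, $4a+2s-2n-4$, $6n+3-2b+2s$, etc., as listed), and check that the index of the surviving first factor reduces modulo $2n+1$ to the stated value ($2a-b$, $2n+1+2a-b$, $2a-b-2n-1$, or $w_1$ in the degenerate case $2a-b=2n+2$). I expect the main obstacle to be bookkeeping rather than ideas: one must keep the index shifts coming from $\varphi_1$ and $\varphi_2$ consistent with the internal modular reductions built into $R^{\gamma}$ and $L^{\gamma}$, and verify in each parity combination that the thresholds in the statement genuinely match the thresholds in those definitions. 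Once the cases are laid out as above, each one is a one-line substitution, so the proof is a table of such substitutions; the companion identities for $\bar c$ computed in Lemma \ref{NicholsOddN_BarC} are then read off the same way, which is what lets the equality $\tilde c=\bar c$ be checked termwise.
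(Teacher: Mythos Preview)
Your plan is correct and matches the paper's own proof essentially line for line: the paper also computes $\tilde c(w_a\otimes w_b)$ by applying $\varphi_1\otimes\varphi_2$, invoking the appropriate branch of $c$ (via $R^\gamma$ or $L^\gamma$ with $\gamma$ determined by the reflected first index), and then unwinding with $\varphi_1^{-1}\otimes\varphi_2^{-1}$, splitting into exactly the parity-and-threshold cases you describe. Your observation that the second tensor slot always returns to $w_a$ is implicit in the paper's computations; otherwise there is no difference in method, and the proof is indeed just the table of substitutions you outline.
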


\begin{proof}
If $a=n+1$, then 
\begin{align*}
\tilde c(w_{n+1}\otimes w_b)
&=\left\{\begin{array}{ll}
(\varphi_1^{-1}\otimes \varphi_2^{-1})c(x_{n+1}w_{n+1}\otimes w_b), & b\,\,\text{odd},\\
(\varphi_1^{-1}\otimes \varphi_2^{-1})c(x_{n+1}w_{n+1}\otimes w_{2n+2-b}), & b\,\,\text{even},\\
\end{array}\right.\\
&=\left\{\begin{array}{ll}
x_{n+1} (-1)^qB^{2(n+1+s-1)}(\varphi_1^{-1}\otimes \varphi_2^{-1})(w_{b}\otimes w_{n+1}), & b\,\,\text{odd},\\
x_{n+1} (-1)^qB^{2(n+1+s-1)}(\varphi_1^{-1}\otimes \varphi_2^{-1})(w_{2n+2-b}\otimes w_{n+1}), & b\,\,\text{even},\\
\end{array}\right.\\
&=x_{n+1}x_{2n+2-b}^{-1} (-1)^qB^{2(n+s)}w_{2n+2-b}\otimes w_{n+1}.
\end{align*}
\noindent
If $a<n+1$, $2\mid a$ and $2\mid b$, then 
\begin{align*}
&\quad \tilde c(w_a\otimes w_b)=(\varphi_1^{-1}\otimes \varphi_2^{-1})c(x_aw_{a}\otimes w_{2n+2-b})\\
&=(-1)^qx_aB^{2(2a+s-n-2)}(\varphi_1^{-1}\otimes \varphi_2^{-1})L^{2(n-a+1)}(w_a\otimes w_{2n+2-b})\\
&=\left\{\begin{array}{ll}
(-1)^qx_aB^{2n+2s}(\varphi_1^{-1}\otimes \varphi_2^{-1})
    (w_{2a-b}\otimes w_{2n+2-a}), &2a-b>0,\vspace{1mm}\\
(-1)^{p+q}x_aB^{2n-1+4a-2b+2s}(\varphi_1^{-1}\otimes \varphi_2^{-1})
    (w_{b-2a+1}\otimes w_{2n+2-a}), &2a-b\leq 0,
\end{array}\right.\\
&=\left\{\begin{array}{ll}
(-1)^qx_ax_{2a-b}^{-1}B^{2n+2s}
    w_{2a-b}\otimes w_{a}, &2a-b>0,\vspace{1mm}\\
(-1)^{p+q}x_ax_{2n+1+2a-b}^{-1}B^{2n-1+4a-2b+2s}
    w_{2n+1+2a-b}\otimes w_{a}, &2a-b\leq 0.
\end{array}\right.
\end{align*}

\par\noindent
If $a< n+1$, $2\nmid a$, $2\nmid b$, then 
\begin{align*}
&\quad \tilde c(w_a\otimes w_b)=(\varphi_1^{-1}\otimes \varphi_2^{-1})c(x_aw_{2n+2-a}\otimes w_{b})\\
&=(-1)^qx_aB^{2s+2n}(\varphi_1^{-1}\otimes \varphi_2^{-1})
R^{2n+2-2a}(w_{2n+2-a}\otimes w_{b})\\
&=\left\{\begin{array}{ll}
(-1)^qx_aB^{2s+2n}(\varphi_1^{-1}\otimes \varphi_2^{-1})(w_{2n+2+b-2a}\otimes w_{a}), 
       &2a-b\geq 1,\vspace{1mm}\\
(-1)^{p+q}x_a\lambda B^{2n+1+2s+2b-4a}(\varphi_1^{-1}\otimes \varphi_2^{-1})
       (w_{2n+1+2a-b}\otimes w_{a}), &2a-b<0,\\
\end{array}\right.\\
&=\left\{\begin{array}{ll}
(-1)^qx_ax_{2a-b}^{-1}B^{2s+2n}w_{2a-b}\otimes w_{a}, 
       &2a-b>0,\vspace{1mm}\\
(-1)^{p+q}x_ax_{2n+1+2a-b}^{-1}\lambda B^{2n+1+2s+2b-4a}
       w_{2n+1+2a-b}\otimes w_{a}, &2a-b<0.\\
\end{array}\right.
\end{align*}

\par\noindent 
If $a<n+1$, $2\mid a$, $2\nmid b$, then 
\begin{align*}
&\quad \tilde c(w_a\otimes w_b)=(\varphi_1^{-1}\otimes \varphi_2^{-1})c(x_aw_{a}\otimes w_{b})\\
&=(-1)^qx_aB^{4a+2s-2n-4}(\varphi_1^{-1}\otimes \varphi_2^{-1})
R^{2n+2-2a}(w_{a}\otimes w_{b})\\
&=\left\{\begin{array}{ll}
(-1)^qx_aB^{4a+2s-2n-4}(\varphi_1^{-1}\otimes \varphi_2^{-1})(w_{2n+2-2a+b}\otimes w_{2n+2-a}), 
      &2a-b>0,\vspace{1mm}\\
(-1)^{p+q}x_a\lambda B^{2s+2b-2n-3}(\varphi_1^{-1}\otimes \varphi_2^{-1})
       (w_{2n+1+2a-b}\otimes w_{2n+2-a}), 
      &2a-b<0,
\end{array}\right.\\
&=\left\{\begin{array}{ll}
(-1)^qx_ax_{2a-b}^{-1}B^{4a+2s-2n-4}w_{2a-b}\otimes w_{a}, 
      &2a-b>0,\vspace{1mm}\\
(-1)^{p+q}x_ax_{2n+1+2a-b}^{-1}\lambda B^{2s+2b-2n-3}w_{2n+1+2a-b}\otimes w_{a},
      &2a-b<0.
\end{array}\right.
\end{align*}

\par\noindent
If $a<n+1$, $2\nmid a$, $2\mid b$, then 
\begin{align*}
&\quad \tilde c(w_a\otimes w_b)=(\varphi_1^{-1}\otimes \varphi_2^{-1})c(x_aw_{2n+2-a}\otimes w_{2n+2-b})\\
&=(-1)^qx_aB^{2n+2s}(\varphi_1^{-1}\otimes \varphi_2^{-1})
L^{2n+2-2a}(w_{2n+2-a}\otimes w_{2n+2-b})\\
&=\left\{\begin{array}{ll}
(-1)^qx_aB^{2n+2s+2(2n+2-2a)}(\varphi_1^{-1}\otimes \varphi_2^{-1})
       (w_{2a-b}\otimes w_{a}), &2a-b>0,\vspace{1mm}\\
(-1)^{p+q}x_aB^{2n+2s+2(2n+2-b)-1}(\varphi_1^{-1}\otimes \varphi_2^{-1})
        (w_{b-2a+1}\otimes w_{a}) &2a-b\leq 0,\\
\end{array}\right.\\
&=\left\{\begin{array}{ll}
(-1)^qx_ax_{2a-b}^{-1}B^{6n+4+2s-4a}
        w_{2a-b}\otimes w_{a}, &2a-b>0,\vspace{1mm}\\
(-1)^{p+q}x_ax_{2n+1+2a-b}^{-1}B^{6n+3+2s-2b}
        w_{2n+1+2a-b}\otimes w_{a},  &2a-b\leq 0.\\
\end{array}\right.
\end{align*}

\par\noindent 
If $a>n+1$, $2\mid a$, $2\mid b$, then 
\begin{align*}
&\quad \tilde c(w_a\otimes w_b)=(\varphi_1^{-1}\otimes \varphi_2^{-1})c(x_aw_{a}\otimes w_{2n+2-b})\\
&=(-1)^qx_aB^{2s+2n}(\varphi_1^{-1}\otimes \varphi_2^{-1})R^{2a-2n-2}(w_a\otimes w_{2n+2-b})\\
&=\left\{\begin{array}{ll}
(-1)^qx_aB^{2s+2n}(\varphi_1^{-1}\otimes \varphi_2^{-1})(w_{2a-b}\otimes w_{2n+2-a}), 
         & 2a-b\leq 2n+1,\vspace{1mm}\\
(-1)^{p+q}x_a\lambda B^{2s+2n+1}(\varphi_1^{-1}\otimes \varphi_2^{-1})
        (w_{2n+1}\otimes w_{2n+2-a}), & 2a-b= 2n+2,\vspace{1mm}\\
(-1)^{p+q}x_a\lambda B^{4a-2b+2s-2n-3}(\varphi_1^{-1}\otimes \varphi_2^{-1})
        (w_{4n+3-2a+b}\otimes w_{2n+2-a}),& 2a-b\geq 2n+3,\\
\end{array}\right. \\
&=\left\{\begin{array}{ll}
\dfrac{(-1)^qx_aB^{2s+2n}}{x_{2a-b}}w_{2a-b}\otimes w_{a}, 
         & 2a-b\leq 2n+1,\vspace{1mm}\\
\dfrac{(-1)^{p+q}x_a\lambda B^{2s+2n+1}}{x_1}
        w_{1}\otimes w_{a}, & 2a-b= 2n+2,\vspace{1mm}\\
\dfrac{(-1)^{p+q}x_a\lambda B^{4a-2b+2s-2n-3}}{x_{2a-b-2n-1}}
        w_{2a-b-2n-1}\otimes w_{a}, & 2a-b\geq 2n+3.\\
\end{array}\right.
\end{align*}

\par \noindent 
If $a>n+1$, $2\nmid a$, $2\nmid b$, then 
\begin{align*}
&\quad \tilde c(w_a\otimes w_b)=(\varphi_1^{-1}\otimes \varphi_2^{-1})c(x_aw_{2n+2-a}\otimes w_{b})\\
&=(-1)^qx_aB^{2(3n+2-2a+s)}(\varphi_1^{-1}\otimes \varphi_2^{-1})L^{2a-2n-2}(w_{2n+2-a}\otimes w_{b})\\
&=\left\{\begin{array}{ll}
(-1)^qx_aB^{2n+2s}(\varphi_1^{-1}\otimes \varphi_2^{-1})
         (w_{2n+2-2a+b}\otimes w_a), &2a-b<2n+2,\vspace{1mm}\\
(-1)^{p+q}x_aB^{2(3n+2-2a+s)+2b-1}(\varphi_1^{-1}\otimes \varphi_2^{-1})
         (w_{2a-b-2n-1}\otimes w_a), &2a-b\geq 2n+2,\\
\end{array}\right.\\
&=\left\{\begin{array}{ll}
\dfrac{(-1)^qx_aB^{2n+2s}}{x_{2a-b}}
         w_{2a-b}\otimes w_a, &2a-b<2n+2,\vspace{1mm}\\
\dfrac{(-1)^{p+q}x_aB^{6n+3-4a+2b+2s}}{x_{2a-b-2n-1}}
         w_{2a-b-2n-1}\otimes w_a, &2a-b\geq 2n+2.\\
\end{array}\right.
\end{align*}
\par\noindent
If $a>n+1$, $2\mid a$, $2\nmid b$, then 
\begin{align*}
&\quad \tilde c(w_a\otimes w_b)=(\varphi_1^{-1}\otimes \varphi_2^{-1})c(x_aw_{a}\otimes w_{b})\\
&=(-1)^qx_aB^{2n+2s}(\varphi_1^{-1}\otimes \varphi_2^{-1})L^{2a-2n-2}(w_{a}\otimes w_{b})\\
&=\left\{\begin{array}{ll}
(-1)^qx_aB^{2(2a-n-2+s)}(\varphi_1^{-1}\otimes \varphi_2^{-1})
        (w_{b-2a+2n+2}\otimes w_{2n+2-a}), &2a-b<2n+2,\vspace{1mm}\\
(-1)^{p+q}x_aB^{2n+2s+2b-1}(\varphi_1^{-1}\otimes \varphi_2^{-1})
        (w_{2a-b-2n-1}\otimes w_{2n+2-a}), &2a-b\geq 2n+2,\\
\end{array}\right.\\
&=\left\{\begin{array}{ll}
\dfrac{(-1)^qx_aB^{2(2a-n-2+s)}}{x_{2a-b}}
        w_{2a-b}\otimes w_{a}, &2a-b<2n+2,\vspace{1mm}\\
\dfrac{(-1)^{p+q}x_aB^{2n+2s+2b-1}}{x_{2a-b-2n-1}}
        w_{2a-b-2n-1}\otimes w_{a}, &2a-b\geq 2n+2.\\
\end{array}\right.
\end{align*}

\par\noindent
If $a>n+1$, $2\nmid a$, $2\mid b$, then 
\begin{align*}
&\quad \tilde c(w_a\otimes w_b)=(\varphi_1^{-1}\otimes \varphi_2^{-1})c(x_aw_{2n+2-a}\otimes w_{2n+2-b})\\
&=(-1)^qx_aB^{2(3n+2-2a+s)}(\varphi_1^{-1}\otimes \varphi_2^{-1})
      R^{2a-2n-2}(w_{2n+2-a}\otimes w_{2n+2-b})\\
&=\left\{\begin{array}{ll}
(-1)^qx_aB^{2(3n+2-2a+s)}(\varphi_1^{-1}\otimes \varphi_2^{-1})
       (w_{2a-b}\otimes w_a), &2a-b\leq 2n+1,\vspace{1mm}\\
(-1)^{p+q}x_a\lambda B^{2(3n+2-2a+s)+1}(\varphi_1^{-1}\otimes \varphi_2^{-1})
       (w_{2n+1}\otimes w_a), &2a-b=2n+2,\vspace{1mm}\\
(-1)^{p+q}x_a\lambda B^{2n+1-2b+2s}(\varphi_1^{-1}\otimes \varphi_2^{-1})
       (w_{4n+3-2a+b}\otimes w_a), &2a-b\geq 2n+3,
\end{array}\right.  \\
&=\left\{\begin{array}{ll}
(-1)^qx_aB^{2(3n+2-2a+s)}x_{2a-b}^{-1}
       w_{2a-b}\otimes w_a, &2a-b\leq 2n+1,\vspace{1mm}\\
(-1)^{p+q}x_a\lambda B^{2(3n+2-2a+s)+1}x_1^{-1}
       w_{1}\otimes w_a, &2a-b=2n+2,\vspace{1mm}\\
\dfrac{(-1)^{p+q}x_a\lambda B^{2n+1-2b+2s}}{x_{2a-b-2n-1}}
       w_{2a-b-2n-1}\otimes w_a, &2a-b\geq 2n+3.
\end{array}\right.     
\end{align*}
\end{proof}

\begin{lemma}\label{NicholsOddN_BarC}
Let $\bar c=(\varphi_2^{-1}\otimes \varphi_1^{-1})c(\varphi_2\otimes \varphi_1)$.  
\begin{enumerate}
\item If $a=n+1$, then 
\[
\bar c(w_a\otimes w_b)=x_{b}x_{n+1}^{-1}(-1)^qB^{2n+2s}w_{2n+2-b}\otimes w_{n+1}.
\]
\item If $a<n+1$ and $2a-b>0$, then 
\begin{align*}
\bar c(w_a\otimes w_b)=\left\{\begin{array}{ll}
x_bx_a^{-1}(-1)^qB^{2n+2s} w_{2a-b}\otimes w_a, & 2\mid (a+b), \vspace{1mm}\\
x_bx_a^{-1}(-1)^qB^{-2n-4+4a+2s} w_{2a-b}\otimes w_a, & 2\nmid a, 2\mid b, \vspace{1mm}\\
x_bx_a^{-1}(-1)^qB^{6n+4-4a+2s} w_{2a-b}\otimes w_a, & 2\mid a, 2\nmid b.\vspace{1mm}\\
\end{array}\right. 
\end{align*}
\item If $a<n+1$ and $2a-b=0$, then 
\begin{align*}
\bar c(w_a\otimes w_b)=\left\{\begin{array}{ll}
x_bx_a^{-1}(-1)^{p+q}\lambda B^{2n+1+2s} w_{2n+1}\otimes w_a, & 2\mid a, 2\mid b, \vspace{1mm}\\
x_bx_a^{-1}(-1)^{p+q}\lambda B^{-2n-3+4a+2s} w_{2n+1}\otimes w_a, & 2\nmid a, 2\mid b. \vspace{1mm}\\
\end{array}\right. 
\end{align*}

\item If $a<n+1$ and $2a-b<0$, then 
\begin{align*}
\bar c(w_a\otimes w_b)=\left\{\begin{array}{ll}
x_bx_a^{-1}(-1)^{p+q}\lambda  B^{2n+1-4a+2b+2s} 
                  w_{2n+1+2a-b}\otimes w_a, & 2\mid a, 2\mid b, \vspace{1mm}\\
x_bx_a^{-1}(-1)^{p+q}  B^{2n-1+4a-2b+2s} 
                  w_{2n+1+2a-b}\otimes w_a, & 2\nmid a, 2\nmid b, \vspace{1mm}\\  
x_bx_a^{-1}(-1)^{p+q}\lambda  B^{-2n-3+2b+2s} 
                  w_{2n+1+2a-b}\otimes w_a, & 2\nmid a, 2\mid b, \vspace{1mm}\\   
x_bx_a^{-1}(-1)^{p+q}  B^{6n+3-2b+2s} 
                  w_{2n+1+2a-b}\otimes w_a, & 2\mid a, 2\nmid b. \vspace{1mm}\\                                                 
\end{array}\right.
\end{align*}

\item If $a>n+1$ and $2a-b<2n+2$, then 
\begin{align*}
\bar c(w_a\otimes w_b)=\left\{\begin{array}{ll}
x_bx_a^{-1}(-1)^{q} B^{2n+2s} w_{2a-b}\otimes w_a, & 2\mid (a+b), \vspace{1mm}\\
x_bx_a^{-1}(-1)^{q} B^{6n+4-4a+2s} w_{2a-b}\otimes w_a, & 2\mid a, 2\nmid b, \vspace{1mm}\\
x_bx_a^{-1}(-1)^{q} B^{-2n-4+4a+2s} w_{2a-b}\otimes w_a, & 2\nmid a, 2\mid b. \vspace{1mm}\\
\end{array}\right. 
\end{align*}
\item If $a>n+1$ and $2a-b\geq 2n+2$, then 
\begin{align*}
\bar c(w_a\otimes w_b)=\left\{\begin{array}{ll}
x_bx_a^{-1}(-1)^{p+q} B^{6n+3-4a+2b+2s} 
        w_{2a-b-2n-1}\otimes w_a, & 2\mid a, 2\mid b, \vspace{1mm}\\
x_bx_a^{-1}(-1)^{p+q} \lambda B^{-2n-3+4a-2b+2s} 
        w_{2a-b-2n-1}\otimes w_a, & 2\nmid a, 2\nmid b, \vspace{1mm}\\
x_bx_a^{-1}(-1)^{p+q}\lambda B^{2n+1-2b+2s} 
        w_{2a-b-2n-1}\otimes w_a, & 2\mid a, 2\nmid b, \vspace{1mm}\\
x_bx_a^{-1}(-1)^{p+q} B^{2n-1+2b+2s} w_{2a-b-2n-1}\otimes w_a, & 2\nmid a, 2\mid b. \vspace{1mm}\\
\end{array}\right. 
\end{align*}
\end{enumerate}
\end{lemma}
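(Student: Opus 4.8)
The plan is to compute $\bar c(w_a\otimes w_b)=(\varphi_2^{-1}\otimes\varphi_1^{-1})\,c\bigl(\varphi_2(w_a)\otimes\varphi_1(w_b)\bigr)$ directly from the definitions of $\varphi_1$, $\varphi_2$ and the explicit braiding $c$ of $\mathscr{N}_{k,pq}^s$ recalled before Theorem~\ref{NicholsAlgN}, in exactly the same spirit as the proof of Lemma~\ref{NicholsOddN_tildeC}. The only structural difference between $\bar c$ and $\tilde c$ is that the two reflections now act on the opposite tensor legs, so each of the nine case computations written out there has an almost verbatim twin, obtained by interchanging the bookkeeping of the first and second factors.

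First I would record the combinatorics of the two maps. The involution $\varphi_2$ fixes $w_a$ for $a$ odd and sends $w_a\mapsto w_{2n+2-a}$ for $a$ even, while $\varphi_1$ fixes $w_b$ for $b$ even and sends $w_b\mapsto x_bw_{2n+2-b}$ for $b$ odd; since $2n+2$ is even, both preserve parities. Hence the first index $\alpha'$ of $\varphi_2(w_a)$ lies on the same side of the central index $n+1$ as $a$ when $a$ is odd and on the opposite side when $a$ is even, which is what selects the relevant branch among the five defining formulas for $c$ (the cases $\alpha=n+1$, and $\alpha<n+1$ or $\alpha>n+1$ with $\alpha+\beta$ even or odd). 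A preliminary observation valid in every case is that $c\bigl(\varphi_2(w_a)\otimes\varphi_1(w_b)\bigr)$ equals $x_b\,\kappa\,w_\mu\otimes w_{2n-\alpha'+2}$ for some scalar $\kappa$ and index $\mu$, and that $\varphi_1^{-1}(w_{2n-\alpha'+2})=x_a^{-1}w_a$ whatever the parity of $a$; therefore $\bar c(w_a\otimes w_b)=x_bx_a^{-1}\kappa\,\varphi_2^{-1}(w_\mu)\otimes w_a$, which already accounts for the common prefactor $x_bx_a^{-1}$ and the second output $w_a$ in every entry of the table.

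Then I would run through the cases: $a=n+1$, and for $a\ne n+1$ the four parity patterns of $(a,b)$, each split further according to the sign and size of $2a-b$ — equivalently, according to which branch of $R^{\gamma}$ (governed by $\beta+\gamma$ relative to $2n+1$ and $2n+2$) or of $L^{\gamma}$ (governed by $\gamma$ relative to $\beta$) is triggered once the reflected indices are inserted. In each branch one substitutes $\varphi_2(w_a)$ and $\varphi_1(w_b)$ into the appropriate formula for $c$, collapses the power of $B$ using the elementary simplifications $\lambda^2=1$, $(-1)^{2p}=1$ and the finite order of $B$ where they occur, and applies $\varphi_2^{-1}=\varphi_2$ to the surviving first factor $w_\mu$. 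One checks along the way that $\mu$ is always the representative of $2a-b$ in $\overline{1,2n+1}$ — so $2a-b\le 0$ is replaced by $2n+1+2a-b$, the value $2a-b=0$ by $2n+1$, and $2a-b\ge 2n+2$ by $2a-b-2n-1$ — which shows that the permutation part of $\bar c$ is $w_a\otimes w_b\mapsto w_{a\rhd b}\otimes w_a$ for the dihedral rack $\Bbb D_{2n+1}$, the fact invoked in Theorem~\ref{NicholsAlgN}.

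The argument is entirely mechanical; the main obstacle is purely combinatorial bookkeeping — keeping the index arithmetic modulo $2n+1$ consistent across the displayed cases and their sub-branches, and tracking the $B$-exponents so that the stated powers emerge. Since the parallel computation for $\tilde c$ is written out in full in the proof of Lemma~\ref{NicholsOddN_tildeC}, in practice it suffices to carry out a few representative branches and note that the others differ only by the systematic left/right interchange of $\varphi_1$ and $\varphi_2$.
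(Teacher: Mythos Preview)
Your proposal is correct and follows essentially the same approach as the paper: a direct case-by-case computation of $(\varphi_2^{-1}\otimes\varphi_1^{-1})c(\varphi_2\otimes\varphi_1)$, organized by the position of $a$ relative to $n+1$ and the parities of $a,b$, exactly parallel to the proof of Lemma~\ref{NicholsOddN_tildeC}. Your preliminary observation that the second output is always $x_a^{-1}w_a$ (yielding the common $x_bx_a^{-1}$ prefactor) is a tidy piece of bookkeeping that the paper leaves implicit but which is indeed what makes every case land on $\cdots\otimes w_a$.
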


\begin{proof}
If $a=n+1$, then 
\begin{align*}
\bar c(w_a\otimes w_b)&=\left\{\begin{array}{ll}
(\varphi_2^{-1}\otimes \varphi_1^{-1})c(x_{b}w_{n+1}\otimes w_b), &2\mid b,\\
(\varphi_2^{-1}\otimes \varphi_1^{-1})c(x_{b}w_{n+1}\otimes w_{2n+2-b}), &2\nmid b,\\
\end{array}\right.\\
&=\left\{\begin{array}{ll}
x_{b}(-1)^qB^{2n+2s}(\varphi_2^{-1}\otimes \varphi_1^{-1})(w_{b}\otimes w_{n+1}), &2\mid b,\\
x_{b}(-1)^qB^{2n+2s}(\varphi_2^{-1}\otimes \varphi_1^{-1})
        (w_{2n+2-b}\otimes w_{n+1}), &2\nmid b,\\
\end{array}\right.\\
&=x_{b}x_{n+1}^{-1}(-1)^qB^{2n+2s}w_{2n+2-b}\otimes w_{n+1}.
\end{align*}
\par\noindent
If $a<n+1$, $2\mid a$, $2\mid b$, then 
\begin{align*}
&\quad \bar c(w_a\otimes w_b)=(\varphi_2^{-1}\otimes \varphi_1^{-1})c(x_{b}w_{2n+2-a}\otimes w_b)\\
&=x_{b}(-1)^qB^{2n+2s}(\varphi_2^{-1}\otimes \varphi_1^{-1})R^{2n+2-2a}(w_{2n+2-a}\otimes w_b)\\
&=\left\{\begin{array}{ll}
x_{b}(-1)^qB^{2n+2s}(\varphi_2^{-1}\otimes \varphi_1^{-1})
          (w_{2n+2-2a+b}\otimes w_a),&2a-b>0,\vspace{1mm}\\
x_{b}(-1)^{p+q}\lambda B^{2n+2s+1}(\varphi_2^{-1}\otimes \varphi_1^{-1})
          (w_{2n+1}\otimes w_a),&2a-b=0,\vspace{1mm}\\
x_{b}(-1)^{p+q}\lambda B^{2n+1-4a+2b+2s}(\varphi_2^{-1}\otimes \varphi_1^{-1})
          (w_{2n+1+2a-b}\otimes w_a),&2a-b<0,\\
\end{array}\right.\\
&=\left\{\begin{array}{ll}
x_{b}x_a^{-1}(-1)^qB^{2n+2s}
          w_{2a-b}\otimes w_a,&2a-b>0,\vspace{1mm}\\
x_{b}x_a^{-1}(-1)^{p+q}\lambda B^{2n+2s+1}
          w_{2n+1}\otimes w_a,&2a-b=0,\vspace{1mm}\\
x_{b}x_a^{-1}(-1)^{p+q}\lambda B^{2n+1-4a+2b+2s}
          w_{2n+1+2a-b}\otimes w_a,&2a-b<0.\\
\end{array}\right.
\end{align*}

\par\noindent
If $a<n+1$, $2\nmid a$, $2\nmid b$, then 
\begin{align*}
&\quad \bar c(w_a\otimes w_b)=(\varphi_2^{-1}\otimes \varphi_1^{-1})c(x_{b}w_{a}\otimes w_{2n+2-b})\\
&=x_{b}(-1)^qB^{-2n-4+4a+2s}(\varphi_2^{-1}\otimes \varphi_1^{-1})
     L^{2n+2-2a}(w_{a}\otimes w_{2n+2-b})\\
&=\left\{\begin{array}{ll}
x_{b}(-1)^qB^{2n+2s}(\varphi_2^{-1}\otimes \varphi_1^{-1})
      (w_{2a-b}\otimes w_{2n+2-a}), &2a-b>0,\vspace{1mm}\\
x_{b}(-1)^{p+q}B^{2n-1+4a-2b+2s}(\varphi_2^{-1}\otimes \varphi_1^{-1})
      (w_{-2a+b+1}\otimes w_{2n+2-a}), &2a-b<0,\\
\end{array}\right.\\
&=\left\{\begin{array}{ll}
x_{b}x_a^{-1}(-1)^qB^{2n+2s}
      w_{2a-b}\otimes w_{a}, &2a-b>0,\vspace{1mm}\\
x_{b}x_a^{-1}(-1)^{p+q}B^{2n-1+4a-2b+2s}
      w_{2n+1+2a-b}\otimes w_{a}, &2a-b<0.\\
\end{array}\right.
\end{align*}

\par\noindent
If $a<n+1$, $2\nmid a$, $2\mid b$, then 
\begin{align*}
&\quad \bar c(w_a\otimes w_b)=(\varphi_2^{-1}\otimes \varphi_1^{-1})c(x_{b}w_{a}\otimes w_{b})\\
&=x_{b}(-1)^qB^{-2n-4+4a+2s}(\varphi_2^{-1}\otimes \varphi_1^{-1})
     R^{2n+2-2a}(w_{a}\otimes w_{b})\\
&=\left\{\begin{array}{ll}
x_{b}(-1)^qB^{-2n-4+4a+2s}(\varphi_2^{-1}\otimes \varphi_1^{-1})
     (w_{2n+2-2a+b}\otimes w_{2n+2-a}), & 2a-b>0, \vspace{1mm}\\
x_{b}(-1)^{p+q}\lambda B^{-2n-4+4a+2s+1}(\varphi_2^{-1}\otimes \varphi_1^{-1})
     (w_{2n+1}\otimes w_{2n+2-a}), & 2a-b=0, \vspace{1mm}\\
x_{b}(-1)^{p+q}\lambda B^{-2n-3+2b+2s}(\varphi_2^{-1}\otimes \varphi_1^{-1})
     (w_{2n+1+2a-b}\otimes w_{2n+2-a}), & 2a-b<0, \vspace{1mm}\\
\end{array}\right.  \\
&=\left\{\begin{array}{ll}
x_{b}x_a^{-1}(-1)^qB^{-2n-4+4a+2s}
     w_{2a-b}\otimes w_{a}, & 2a-b>0, \vspace{1mm}\\
x_{b}x_a^{-1}(-1)^{p+q}\lambda B^{-2n-3+4a+2s}
     w_{2n+1}\otimes w_{a}, & 2a-b=0, \vspace{1mm}\\
x_{b}x_a^{-1}(-1)^{p+q}\lambda B^{-2n-3+2b+2s}
     w_{2n+1+2a-b}\otimes w_{a}, & 2a-b<0. 
\end{array}\right.    
\end{align*}

\par\noindent 
If $a<n+1$, $2\mid a$, $2\nmid b$, then 
\begin{align*}
&\quad \bar c(w_a\otimes w_b)=(\varphi_2^{-1}\otimes \varphi_1^{-1})c
        (x_{b}w_{2n+2-a}\otimes w_{2n+2-b})\\
&=x_{b}(-1)^qB^{2n+2s}(\varphi_2^{-1}\otimes \varphi_1^{-1})
     L^{2n+2-2a}(w_{2n+2-a}\otimes w_{2n+2-b})\\
&=\left\{\begin{array}{ll}
x_{b}(-1)^qB^{6n+4-4a+2s}(\varphi_2^{-1}\otimes \varphi_1^{-1})
      (w_{2a-b}\otimes w_a), &2a-b>0,\vspace{1mm}\\
x_{b}(-1)^{p+q}B^{6n+3-2b+2s}(\varphi_2^{-1}\otimes \varphi_1^{-1})
      (w_{b-2a+1}\otimes w_a), &2a-b<0,\vspace{1mm}\\
\end{array}\right. \\
&=\left\{\begin{array}{ll}
x_{b}x_a^{-1}(-1)^qB^{6n+4-4a+2s}
      w_{2a-b}\otimes w_a, &2a-b>0,\vspace{1mm}\\
x_{b}x_a^{-1}(-1)^{p+q}B^{6n+3-2b+2s}
      w_{2n+1+2a-b}\otimes w_a, &2a-b<0.\\
\end{array}\right.         
\end{align*} 

\par\noindent
If $a>n+1$, $2\mid a$, $2\mid b$, then     
\begin{align*}
&\quad \bar c(w_a\otimes w_b)=(\varphi_2^{-1}\otimes \varphi_1^{-1})c
        (x_{b}w_{2n+2-a}\otimes w_{b})\\
&=x_{b}(-1)^qB^{6n+4-4a+2s}(\varphi_2^{-1}\otimes \varphi_1^{-1})
     L^{-2n-2+2a}(w_{2n+2-a}\otimes w_{b})\\
&=\left\{\begin{array}{ll}
x_{b}(-1)^qB^{2n+2s}(\varphi_2^{-1}\otimes \varphi_1^{-1})
       (w_{2n+2-2a+b}\otimes w_{a}), & 2a-b<2n+2,\vspace{1mm}\\
x_{b}(-1)^{p+q}B^{6n+4-4a+2s+2b-1}(\varphi_2^{-1}\otimes \varphi_1^{-1})
        (w_{2a-b-2n-1}\otimes w_{a}), & 2a-b\geq 2n+2,\vspace{1mm}\\
\end{array}\right.   \\
&=\left\{\begin{array}{ll}
x_{b}x_a^{-1}(-1)^qB^{2n+2s}
       w_{2a-b}\otimes w_{a}, & 2a-b<2n+2,\vspace{1mm}\\
x_{b}x_a^{-1}(-1)^{p+q}B^{6n+3-4a+2b+2s}
        w_{2a-b-2n-1}\otimes w_{a}, & 2a-b\geq 2n+2.\vspace{1mm}\\
\end{array}\right.    
\end{align*}    

\par\noindent
If $a>n+1$, $2\mid a$, $2\nmid b$, then  
\begin{align*}
&\quad \bar c(w_a\otimes w_b)=(\varphi_2^{-1}\otimes \varphi_1^{-1})c
        (x_{b}w_{2n+2-a}\otimes w_{2n+2-b})\\
&=x_{b}(-1)^qB^{6n+4-4a+2s}(\varphi_2^{-1}\otimes \varphi_1^{-1})
     R^{-2n-2+2a}(w_{2n+2-a}\otimes w_{2n+2-b})\\
&=\left\{\begin{array}{ll}
x_{b}(-1)^qB^{6n+4-4a+2s}(\varphi_2^{-1}\otimes \varphi_1^{-1})
        (w_{2a-b}\otimes w_a), & 2a-b\leq 2n+1,\vspace{1mm}\\
x_{b}(-1)^{p+q}\lambda B^{2n+1-2b+2s}(\varphi_2^{-1}\otimes \varphi_1^{-1})
        (w_{4n+3-2a+b}\otimes w_a), & 2a-b\geq 2n+3,\vspace{1mm}\\
\end{array}\right.   \\
&=\left\{\begin{array}{ll}
x_{b}x_a^{-1}(-1)^qB^{6n+4-4a+2s}
        w_{2a-b}\otimes w_a, & 2a-b\leq 2n+1,\vspace{1mm}\\
x_{b}x_a^{-1}(-1)^{p+q}\lambda B^{2n+1-2b+2s}
        w_{2a-b-2n-1}\otimes w_a, & 2a-b\geq 2n+3.\vspace{1mm}\\
\end{array}\right.    
\end{align*}   

\par\noindent 
If $a>n+1$, $2\nmid a$, $2\nmid b$, then  
\begin{align*}
&\quad \bar c(w_a\otimes w_b)=(\varphi_2^{-1}\otimes \varphi_1^{-1})c
        (x_{b}w_{a}\otimes w_{2n+2-b})\\
&=x_{b}(-1)^qB^{2n+2s}(\varphi_2^{-1}\otimes \varphi_1^{-1})
     R^{-2n-2+2a}(w_{a}\otimes w_{2n+2-b})\\
&=\left\{\begin{array}{ll}
x_{b}(-1)^qB^{2n+2s}(\varphi_2^{-1}\otimes \varphi_1^{-1})
        (w_{2a-b}\otimes w_{2n+2-a}), &2a-b\leq 2n+1,\vspace{1mm}\\
x_{b}(-1)^{p+q}\lambda B^{-2n-3+4a-2b+2s}(\varphi_2^{-1}\otimes \varphi_1^{-1})
        (w_{4n+3-2a+b}\otimes w_{2n+2-a}), &2a-b\geq 2n+3,\vspace{1mm}\\
\end{array}\right.   \\
&=\left\{\begin{array}{ll}
x_{b}x_a^{-1}(-1)^qB^{2n+2s}
        w_{2a-b}\otimes w_{a}, &2a-b\leq 2n+1,\vspace{1mm}\\
x_{b}x_a^{-1}(-1)^{p+q}\lambda B^{-2n-3+4a-2b+2s}
        w_{2a-b-2n-1}\otimes w_{a}, &2a-b\geq 2n+3.\vspace{1mm}\\
\end{array}\right.    
\end{align*}   

\par\noindent
If $a>n+1$, $2\nmid a$, $2\mid b$, then   
\begin{align*}
&\quad \bar c(w_a\otimes w_b)=(\varphi_2^{-1}\otimes \varphi_1^{-1})c
        (x_{b}w_{a}\otimes w_{b})\\
&=x_{b}(-1)^qB^{2n+2s}(\varphi_2^{-1}\otimes \varphi_1^{-1})
     L^{-2n-2+2a}(w_{a}\otimes w_{b})\\
&=\left\{\begin{array}{ll}
x_{b}(-1)^qB^{-2n-4+4a+2s}(\varphi_2^{-1}\otimes \varphi_1^{-1})
         (w_{2n+2-2a+b}\otimes w_{2n+2-a}), & 2a-b<2n+2, \vspace{1mm}\\
x_{b}(-1)^{p+q}B^{2n-1+2b+2s}(\varphi_2^{-1}\otimes \varphi_1^{-1})
         (w_{2a-b-2n-1}\otimes w_{2n+2-a}), & 2a-b\geq 2n+2, \vspace{1mm}\\
\end{array}\right.   \\
&=\left\{\begin{array}{ll}
x_{b}x_a^{-1}(-1)^qB^{-2n-4+4a+2s}
         w_{2a-b}\otimes w_{a}, & 2a-b<2n+2, \vspace{1mm}\\
x_{b}x_a^{-1}(-1)^{p+q}B^{2n-1+2b+2s}
         w_{2a-b-2n-1}\otimes w_{a}, & 2a-b\geq 2n+2. \vspace{1mm}\\
\end{array}\right.    
\end{align*}     
\end{proof}

\subsection{The Nichols algebras $\mathfrak{B}\left(\mathscr{L}_{k,pq}^s\right)$}

Let $\mathscr{L}_{k,pq}^s$ be the $(2n+1)$-dimensional simple Yetter-Drinfeld module over 
$A_{N\, 2n+1}^{\mu\lambda}$, see \cite[Appendix]{Shi2020odd}. 
\begin{lemma}\label{NicholsAlgL}
The Yetter-Drinfeld module $\mathscr{L}_{k,pq}^s$ is of dihedral rack type $\Bbb D_{2n+1}$.
\end{lemma}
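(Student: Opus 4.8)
The plan is to make the braiding of $\mathscr{L}_{k,pq}^s$ explicit from the module and comodule data recorded in \cite[Appendix]{Shi2020odd}, \cite[Section 4.4]{Shi2020odd} and then to verify \emph{directly}, without any auxiliary twist, that it is a braiding attached to the dihedral rack $\Bbb D_{2n+1}$ in the sense of Example \ref{TypeD} and Definition \ref{BraidedVectorSpace}. Writing the basis as $\{w_a\}$ with the index $a$ identified with an element of $\Bbb Z_{2n+1}$, and using that the Yetter--Drinfeld braiding is $c(w_a\otimes w_b)=(w_a)_{(-1)}\cdot w_b\otimes (w_a)_{(0)}$, the first step is to read off, from the action of the generators $x_{11},x_{12},x_{21},x_{22}$ and the grouplike part of the coaction, the scalar and the index shift produced when $(w_a)_{(-1)}$ acts on $w_b$.

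The key claim to be established is the identity
\[
c(w_a\otimes w_b)=\gamma_{a,b}\,w_{2a-b}\otimes w_a,\qquad \gamma_{a,b}\in\k^\times,
\]
with all subscripts read modulo $2n+1$. Granting this, $(\mathscr{L}_{k,pq}^s,c)$ is literally the braided vector space $W_{X,r}$ of Definition \ref{BraidedVectorSpace} with $X=\Bbb Z_{2n+1}$, $\sigma_a(b)=2a-b=a\rhd b$, $\tau_b(a)=a$ and $R_{a,b}=\gamma_{a,b}$; the required cocycle condition on the $\gamma_{a,b}$ holds automatically because $c$ is a braiding. Hence $\mathscr{L}_{k,pq}^s$ is of dihedral rack type $\Bbb D_{2n+1}$. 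To prove the displayed identity I would split into cases according to the parities of $a$ and $b$ and according to whether the index produced by the action overflows $2n+1$, exactly as in the treatment of $\mathscr{N}_{k,pq}^s$ in Theorem \ref{NicholsAlgN} and Lemmas \ref{NicholsOddN_tildeC}, \ref{NicholsOddN_BarC}; in each branch the subscript of the first tensor factor is $2a-b$ or $2a-b\pm(2n+1)$, hence $2a-b$ in $\Bbb Z_{2n+1}$, and the subscript of the second factor is $a$, the remaining scalar being absorbed into $\gamma_{a,b}$.

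The only real work is the bookkeeping inside this case analysis: tracking which basis vector lands in each tensor slot across the parity/overflow branches and confirming that the first-slot index always reduces to $2a-b$. In contrast with the cases $\mathscr{K}_{jk,p}^{s}$ and $\mathscr{N}_{k,pq}^s$, here no maps $\varphi_1,\varphi_2$ and no appeal to Lemma \ref{MainLemma} are needed, and in particular the precise values $\gamma_{a,b}$ are irrelevant to the statement --- only the rack permutation $b\mapsto 2a-b$ has to be checked. If the raw computation produces the first-slot index as an affine function $2a-b+\mathrm{const}$, a harmless global relabeling of the basis of the form $w_a\mapsto w_{a+\mathrm{const}'}$ brings it to the normalized form above without changing the rack, so this is the mildest possible obstacle.
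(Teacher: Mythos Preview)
Your overall strategy---show that the underlying set-theoretical solution is a rack isomorphic to $\Bbb D_{2n+1}$, with the scalars playing no role---matches the paper's. But the central claim that in the given basis one has $c(w_a\otimes w_b)=\gamma_{a,b}\,w_{2a-b}\otimes w_a$ with indices read modulo $2n+1$ is false, and the fallback you propose (a global shift $w_a\mapsto w_{a+\mathrm{const}'}$) cannot repair it. From the explicit description of the rack $r$ attached to $\mathscr{L}_{k,pq}^s$ (cf.\ \cite[Section 4.3]{Shi2020odd}), the first coordinate of $r(a,b)$ depends on the parity of $a+b$: for instance, with $a=1$, $b=2$ (so $2\nmid(a+b)$) one gets first coordinate $b+2a-1=3$, whereas the dihedral value is $2a-b\equiv 2n+1$; with $a=1$, $b=3$ (so $2\mid(a+b)$) one gets $b-2a+1=2$, whereas the dihedral value is $2a-b\equiv 2n$. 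Thus in one parity class the index behaves like $2a+b+\mathrm{const}$ and in the other like $\pm(2a-b)+\mathrm{const}$; no single affine relabeling $a\mapsto a+c$ (which is an automorphism of $\Bbb D_{2n+1}$) can reconcile both.

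What is actually needed---and what the paper does---is the parity-dependent bijection
\[
f(a)=\begin{cases} a,& a\ \text{odd},\\ 2n+2-a,& a\ \text{even},\end{cases}
\]
after which $(f^{-1}\times f^{-1})\,r\,(f\times f)(a,b)=(\gamma,a)$ with $\gamma\equiv 2a-b\pmod{2n+1}$ in every case. So your outline is salvageable, but only once you replace the conjectured shift by this non-affine relabeling and then carry out the case analysis; the ``mildest possible obstacle'' you anticipated is in fact the whole content of the proof.
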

\begin{proof}
The Yetter-Drinfeld module  $\mathscr{L}_{k,pq}^s$ is of rack type, see  \cite[Section 4.3]{Shi2020odd}.
Let $X=\overline{1, 2n+1}$, and $r: X\times X\to X\times X$ satisfy the following conditions.  
\begin{enumerate}
\item If $2\nmid (a+b)$ for $a, b\in X$, set 
$b+(2a-1)=d_1(2n+1)+d_0$, $d_1\in\Bbb N$,  $d_0\in\overline{0, 2n}$.
Then 
\begin{align*}
r(a, b)=\left\{\begin{array}{ll}
(b+2a-1, a), & d_0=0,\\
(2n+1, a), & d_1=1, d_0=0,\\
(2n+2-d_0, a), &d_1=1, d_0\neq 0,\\
(1, a), & d_1=2, d_0=0,\\
(d_0, a), & d_1=2, d_0\neq 0.
\end{array}\right.
\end{align*} 
\item If $2\mid (a+b)$ for $a, b\in X$, set $2a-1=b+d_1(2n+1)+d_0$, $d_1\in\Bbb N$, 
$d_0\in\overline{0, 2n}$. Then 
\begin{align*}
r(a, b)=\left\{\begin{array}{ll}
(b-2a+1, a), & 2a-1<b,\\
(2a-b, a), &d_1=0,\\
(2n+1-d_0, a), &d_1=1. 
\end{array}\right. 
\end{align*}
\end{enumerate}
According to \cite[Section 4.3]{Shi2020odd}, $\mathscr{L}_{k,pq}^s$ is associated to 
the set-theoretical solution $(X, r)$ of the Yang-Baxter equation defined above. 
So we only need to prove that $(X, r)$ is isomorphic to the solution of the Yang-Baxter equation 
arising from the dihedral rack $\Bbb D_{2n+1}$. 

Let $f: X\to X$ such that $f(a)=\left\{\begin{array}{ll}
a, & a \text{ odd}, \\
2n+2-a, & a \text{ even}, \\
\end{array}\right.$
for all $a\in X$. 
Suppose $(f^{-1}\times f^{-1})r(f\times f)(a, b)=(\gamma, a)$, then we claim that 
\begin{align}\label{DihedralRelation}
\gamma\equiv 2a-b \mod (2n+1).
\end{align} 
The formula \eqref{DihedralRelation} can be proved via case by case verification. For example, 
if $2\mid a$, $2\mid b$, set
$2(2n+2-a)-1=(2n+2-b)+d_1(2n+1)+d_0$,   $d_1\in\Bbb N$,
$d_0\in\overline{0, 2n}$. Then we have
\begin{align*}
&\quad (f^{-1}\times f^{-1})r(f\times f)(a, b)=(f^{-1}\times f^{-1})r(2n+2-a, 2n+2-b)\\
&=\left\{\begin{array}{ll}
(f^{-1}\times f^{-1})(2a-b-2n-1, 2n+2-a), & 2a-b>2n+1,\\
(f^{-1}\times f^{-1})(2n+2-2a+b, 2n+2-a), & 2a-b\in\overline{1, 2n+1},\\
(f^{-1}\times f^{-1})(2n+1+2a-b, 2n+2-a), & 2a-b\leq 0,\\
\end{array}\right.\\
&=\left\{\begin{array}{ll}
(2a-b-2n-1, a), & 2a-b>2n+1,\\
(2a-b, a), & 2a-b\in\overline{1, 2n+1},\\
(2n+1+2a-b, a), & 2a-b\leq 0.
\end{array}\right.
\end{align*}
\end{proof}

\subsection{The Nichols algebra $\mathfrak{B}\left(\mathscr{I}_{pjk}^s\right)$}
Let $\mathscr{I}_{pjk}^s=\bigoplus_{a=1}^{n}\left(\k w_a\oplus\k m_a\right)$ be  the $2n$-dimensional 
simple Yetter-Drinfeld module over $A_{N\,2n}^{\mu\lambda}$, see \cite[Appendix]{Shi2020even}. 
Set $w_{n+a}=m_a$ for $a\in\overline{1, n}$. It is not difficult to see the braided vector space
$\mathscr{I}_{pjk}^s$ is associated to a set-theoretical solution $(X, r)$ of the Yang-Baxter equation, where 
$X=\overline{1, 2n}$, and $r$ is described as follows. 
\begin{enumerate}
\item If $2\mid (a+b)$, $a, b\in\overline{1, n}$, then 
\begin{align*}
r(a, b)=\left\{\begin{array}{ll}
(1, a), &b=2a,\\
(b-2a+1, a), &b>2a,\\
(2a-b, a), & 0<2a-b\leq n,\\
(2n+1-2a+b, a), & 1+n\leq 2a-b.\\
\end{array}\right.
\end{align*}

\item If $2\nmid (a+b)$, $a, b\in\overline{1, n}$, then
\begin{align*}
r(a, b)=\left\{\begin{array}{ll}
(2a+b-1, a), &2a+b-1\leq n,\\
(2n+2-2a-b, a), &n\leq 2a+b-2\leq 2n-1,\\
(2a+b-2n-1, a), & 2n\leq 2a+b-2.\\
\end{array}\right.
\end{align*}

\item If $2\nmid (a+b)$, $a, b\in\overline{n+1, 2n}$, then 
\begin{align*}
r(a, b)=\left\{\begin{array}{ll}
(2a+b-2n-1, a), &3n+3\leq 2a+b\leq 4n+1,\\
(6n+2-2a-b, a), &4n+2\leq 2a+b\leq 5n+1,\\
(2a+b-4n-1, a), & 5n+2\leq 2a+b.\\
\end{array}\right.
\end{align*}

\item If $2\mid (a+b)$, $a, b\in\overline{n+1, 2n}$, then 
\begin{align*}
r(a, b)=\left\{\begin{array}{ll}
(n+1, a), & 2a-b=n,\\
(2a-b, a), & n<2a-b\leq 2n,\\
(4n+1-2a+b, a), & 2n+1\leq 2a-b\leq 3n-1,\\
(2n+1+b-2a, a), & 0<2a-b<n.
\end{array}\right. 
\end{align*}

\item If $2\mid (a+b)$, $a\in\overline{n+1, 2n}$, $b\in\overline{1,n}$, then 
\begin{align*}
r(a, b)=\left\{\begin{array}{ll}
(2a+b-2n-1, a), & 2n+3\leq 2a+b\leq 3n+1,\\
(4n+2-2a-b, a), & 3n+2\leq 2a+b\leq 4n+1,\\
(2a+b-4n-1, a), & 4n+2\leq 2a+b\leq 5n.\\
\end{array}\right. 
\end{align*}

\item If $2\nmid (a+b)$, $a\in\overline{n+1, 2n}$, $b\in\overline{1,n}$, then
\begin{align*}
r(a, b)=\left\{\begin{array}{ll}
(1, a), & 2a-b=2n,\\
(2a-b-2n, a), & 2n< 2a-b\leq 3n,\\
(4n+1-2a+b, a), & 3n+1\leq 2a-b< 4n,\\
(b-2a+2n+1, a), & n+1\leq 2a-b< 2n.\\
\end{array}\right. 
\end{align*}

\item If $2\nmid (a+b)$, $a\in\overline{1, n}$, $b\in\overline{n+1, 2n}$, then
\begin{align*}
r(a, b)=\left\{\begin{array}{ll}
(n+1, a), & 2a-b=-n,\\
(2n+2a-b, a), & -n<2a-b\leq 0,\\
(2n+1-2a+b, a), & 1\leq 2a-b, \\
(1+b-2a, a), & 2a-b<-n.
\end{array}\right. 
\end{align*}

\item If $2\mid (a+b)$, $a\in\overline{1, n}$, $b\in\overline{n+1, 2n}$, then
\begin{align*}
r(a, b)=\left\{\begin{array}{ll}
(2a+b-1, a), & 2a+b\leq 2n+1,\\
(4n+2-2a-b, a), & 2n+2\leq 2a+b\leq 3n+1, \\
(2a+b-2n-1, a), & 2a+b\geq 3n+2.
\end{array}\right. 
\end{align*}
\end{enumerate}

\begin{lemma}\label{NicholsAlgI}
The Yetter-Drinfeld module $\mathscr{I}_{pjk}^s$ is of dihedral rack type $\Bbb D_{2n}$.
\end{lemma}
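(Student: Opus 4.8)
The plan is to follow the proof of Lemma \ref{NicholsAlgL} step for step. The discussion preceding the present lemma has already exhibited the non-degenerate set-theoretical solution $(X,r)$, $X=\overline{1,2n}$, to which $\mathscr{I}_{pjk}^s$ is associated; since every branch of $r$ has the shape $r(a,b)=(a\rhd b,\,a)$, this $(X,r)$ is of rack type, the rack operation $a\rhd b$ being read off from cases (1)--(8). It therefore suffices to prove that $(X,r)$ is isomorphic, as a set-theoretical solution of the Yang--Baxter equation, to the one coming from the dihedral rack $\Bbb D_{2n}=(\Bbb Z_{2n},\rhd)$, $i\rhd j=2i-j$; equivalently, to exhibit a bijection $f\colon X\to X$ with
\[
(f^{-1}\times f^{-1})\,r\,(f\times f)(a,b)=\bigl(\,2a-b\bmod 2n,\;a\,\bigr)\qquad\text{for all }a,b\in X.
\]

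The right choice of $f$ is dictated by the orbit structure. The identification $w_{n+a}=m_a$ splits $X$ into the block $\overline{1,n}$ and the block $\overline{n+1,2n}$, and one sees from the eight cases that every map $\phi_a$, $\phi_a(b)=a\rhd b$, preserves this partition and that each block is a single orbit of the rack $(X,\rhd)$; on the dihedral side, the two orbits of $\Bbb D_{2n}$ are the even and the odd residues, each a copy of $\Bbb D_n$ embedded in $\Bbb D_{2n}$ by the doubling map. I would therefore take $f$ to be a parity-type relabelling in the spirit of the one in Lemma \ref{NicholsAlgL}: it sends $\overline{1,n}$ onto the even residues and $\overline{n+1,2n}$ onto the odd residues, reflecting each block so that each $\phi_a$ becomes a reflection of the form $b\mapsto 2a-b$. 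Its explicit form, like that of $f$ in Lemma \ref{NicholsAlgL}, will be piecewise according to the parity of the argument and to the block it lies in.

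With such an $f$ fixed, what remains is a case-by-case verification parallel to --- but bulkier than --- the one in Lemma \ref{NicholsAlgL}: for each of the eight cases in the description of $r$, and each of its subcases (those given by the various inequalities on $2a\pm b$ occurring there), substitute $f(a)$ and $f(b)$, apply the matching branch of $r$, apply $f^{-1}$, and check that the resulting index is congruent to $2a-b$ modulo $2n$; one must also verify that the case and subcase conditions transform correctly under $a\mapsto f(a)$, $b\mapsto f(b)$, exactly the sort of boundary bookkeeping already illustrated in the proofs of Theorem \ref{NicholsAlgK} and Lemma \ref{NicholsAlgL}. Any single subcase reduces to a one-line congruence; the genuine obstacle is twofold: first, pinning down the correct piecewise $f$ from the orbit analysis, and second, carrying the roughly thirty subcases through without a sign slip or an error in a modular reduction. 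Once the displayed identity is established, $(X,r)$ is isomorphic to the dihedral solution $\Bbb D_{2n}$, and hence $\mathscr{I}_{pjk}^s$ is of dihedral rack type $\Bbb D_{2n}$, as claimed.
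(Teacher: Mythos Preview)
Your strategy is exactly the paper's: exhibit a bijection $f\colon X\to X$ conjugating $(X,r)$ to the dihedral solution and then verify the identity case by case. The paper, however, goes one step further than your proposal by actually writing down $f$:
\[
f(a)=\begin{cases}
2n+1-a,& a\text{ even},\ 1<a\le n,\\
2n+1-a,& a\text{ odd},\ n<a<2n,\\
a,&\text{otherwise},
\end{cases}
\]
which sends the block $\overline{1,n}$ to the odd elements of $X$ and $\overline{n+1,2n}$ to the even ones (the reverse of your parity guess, though either assignment would work). With this $f$ in hand, the paper carries out precisely the kind of sample verification you describe, treating the subcase $2\mid a$, $2\mid b$, $a,b\in\overline{1,n}$ and leaving the remaining subcases to the reader.

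So your outline is correct and matches the paper's approach, but it is incomplete: what you yourself identify as the first ``genuine obstacle'' --- pinning down the explicit $f$ --- is left unresolved in your proposal, and that formula is essentially the entire content of the paper's proof. Your orbit analysis is a sound heuristic for guessing $f$, but it does not by itself determine the map; the paper simply states $f$ and checks it works.
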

\begin{proof}
We only need to prove that the set-theoretical solution $(X, r)$ of the Yang-Baxter equation 
is  of dihedral rack type $\Bbb D_{2n}$. That is to say, we need to find a bijective map 
$f: X\to X$, such that 
\begin{align}\label{DihdedraF}
(f^{-1}\times f^{-1})r(f\times f)(a, b)=(\gamma, a), \quad 
\gamma\equiv 2a-b\mod 2n,\quad \forall a, b\in X.
\end{align}
Define the map $f: X\to X$ as 
\[
f(a)=\left\{\begin{array}{ll}
2n+1-a, & a\text{ even}, 1<a\leq n, \\
2n+1-a, & a\text{ odd},  n<a<2n, \\
a, & otherwise. 
\end{array}\right.
\]
The formula \eqref{DihdedraF} can be verified case by case. For example, if 
$2\mid a$, $2\mid b$, and $a, b\in\overline{1, n}$, then 
\begin{align*}
&\quad (f^{-1}\times f^{-1})r(f\times f)(a, b)
=(f^{-1}\times f^{-1})r(2n+1-a, 2n+1-b)\\
&=\left\{\begin{array}{ll}
(f^{-1}\times f^{-1})(n+1, 2n+1-a), & 2a-b=n+1,\\
(f^{-1}\times f^{-1})(2n+1-2a+b, 2n+1-a), & 1\leq 2a-b<n+1,\\
(f^{-1}\times f^{-1})(2n+2a-b, 2n+1-a), & 2-n\leq 2a-b\leq 0,\\
(f^{-1}\times f^{-1})(2a-b, 2n+1-a), & n+1< 2a-b<2n+1,\\
\end{array}\right.\\
&=\left\{\begin{array}{ll}
(n+1, a), & 2a-b=n+1,\\
(2a-b, a), & 1\leq 2a-b<n+1,\\
(2n+2a-b, a), & 2-n\leq 2a-b\leq 0,\\
(2a-b, a), & n+1< 2a-b<2n+1.\\
\end{array}\right.
\end{align*}
\end{proof}

\end{document}